\newtheorem{theorem}{Theorem}%[section]
\newtheorem{corollary}[theorem]{Corollary}
\newtheorem{remark}[theorem]{Remark}
\newtheorem{lemma}[theorem]{Lemma}
\newcommand{\Q}{\mathbb Q}
\newcommand{\N}{\mathbb N}
\newcommand{\T}{\mathbb T}
\newcommand{\zf}{\zeta_f}
\newcommand{\SSS}{{\mathbb S}}
\newcommand{\trace}{\text{trace}}
\author[M.J. Gonz\'alez]{Marcos J. Gonz\'alez}
\address{ Departamento de Matem\'aticas, Universidad Sim\'on
Bol\'{\i}var, Apartado 89000, Caracas 1086-A, Venezuela}
\email{mago@usb.ve, gmarcos76@gmail.com}
\author[V.F. Sirvent]{V\'{\i}ctor F. Sirvent}
\address{Departamento de Matem\'aticas, Universidad Cat\'olica del Norte, Antofagasta, Chile}
\email{victor.sirvent@ucn.cl}
\author[R. Urz\'ua]{Richard Urz\'ua}
\address{Departamento de Matem\'aticas, Universidad Cat\'olica del Norte, Antofagasta, Chile}
\email{rurzua@ucn.cl}
\title{Homological data on the periodic structure  of self-maps on  wedge sums}
\date{\today}
\keywords{Lefschetz numbers, wedge sums, periodic point, Lefschetz zeta function, toral maps, Dold coefficients, algebraic periods}
\subjclass[2020]{37C25,55M20,37C30,37E15}
\begin{document}

\begin{abstract}
In this article, we study the periodic points for continuous self-maps on a wedge sum of topological manifolds, exhibiting a particular combinatorial structure.
We compute explicitly the Lefschetz numbers, the Dold coefficients and consider its  set of algebraic periods.
Moreover, we study the special case of maps on a wedge sum of tori, and show some of  the homological obstructions present in defining these  maps.
\end{abstract}

\maketitle

\section{Introduction}\label{s:introduction}

An important problem  common to  the theory of dynamical systems and fixed point theory is the study of periodic points and the period sets of a continuous self-map on a compact topological space. 
One of the approaches widely used is via the Lefschetz fixed point theorem and its refinements~(\emph{cf.}~\cite{BGW,JM}).

\smallskip

We are interested in finding certain homological information bounding the number of periodic points and their periods
for a continuous self-map on a wedge sum of topological manifolds. 
We recall that the wedge sums of those spaces are not topological manifolds~(\cite[p. 10]{hatcher}).
One class of  simple maps on this category are  graph maps, i.e. self-maps on a wedge sum of circles, whose periodic point set structure has been extensively studied, see for instance~\cite{ALLM,G-Ll:2,G-Ll-W,jiang,Ll-Mi,Ll-Sa,Ll-Si:2021}.
Also, self-maps on a wedge sum of spheres has been studied in different contexts~\cite{babo,BGW,Ll-Si:2013,MP,Si:2023}.
Besides these cases, the dynamics of maps on wedge sums of other spaces have not been studied in detail as far as we know.

\smallskip

In this article we deal with the calculation of the Lefschetz numbers, the Lefschetz zeta functions, Dold coefficients (the Dold coefficient of order $m$ is also known in the literature as the Lefschetz number of period $m$) and the algebraic periods for general self-maps on wedge sums of compact topological manifolds. We give explicit and closed formulae in terms of some more basic maps called  coordinate maps.
In particular,  Theorem~\ref{thm:main1} reduces the Lefschetz numbers and the Lefschetz zeta function of the map $f$ in terms of the Lefschetz numbers and zeta functions of its coordinate maps.
Theorem~\ref{thm:l-chica}  deals with the Dold coefficients and Corollary~\ref{coro:aperL} with the  algebraic periods for such map. 

\smallskip

The study of maps without periodic points has been done in different contexts and  spaces, see for instance~\cite{BW,Ha,Ja,Ko}. 
A weaker notion is the  concept of Lefschetz periodic point free map, i.e. a map whose all Lefschetz numbers are equal to zero, for  example see these articles in the subject~\cite{GKNS,G-Ll:2011,Ll-Si:2013,Si:2023}.
In Section~\ref{s:tori} we deal about this subject and we prove in 
Theorem~\ref{thm: torus-lppf}, that establishes that a continuous permutative self map on a wedge sum of tori is not Lefschetz periodic point free unless the induced maps on homology have zero eigenvalues.

\smallskip

We highlight  the importance of the study of the dynamics of self-maps on wedge-sums; since the computation of the Lefschetz numbers, Dold coefficients, algebraic periods and other information based on homology invariants of any map defined on a compact polyhedron (or more generally  an ENR) can be reduced to a map defined on a wedge sum of spheres (of different dimensions).

\smallskip

The article is structured as follows in Section~\ref{s:2} we introduce the basic notions and present the main results and statements of the article.
In Section~\ref{s:proofs}, we prove the main results and introduce the constructions required in their proofs.
In Section~\ref{s:tori} we deal with the particular case of self-maps on wedge sums of tori. 
In Section~\ref{s:examples} we give a list of examples that illustrate the theory developed throughout the article and some of the challenges present in the generalization  of the context expounded here, which are summarized in the next section. 
In  Section~\ref{s:questions} we provide a list of open problems  and conjectures that have arisen   in the course of our research, and we make  some general comments. 

 %%%%%%%%%%%%%%%%%%%%%%%%%%%%%%%%%%%%%%%
\section{Preliminaries and statements of the main results}\label{s:2}

A {\em pointed topological space} is a pair $(X,x_0)$ where $X$ is a topological space and $x_0\in X$. 
The {\em  wedge sum} of two pointed topological spaces $(X,x_0)$ and $(Y,y_0)$ is the quotient of the 
disjoint union $X\sqcup Y$ by identifying $x_0$ and $y_0$ to a single point, called the \emph{base point} of the wedge sum.
Given $s$ pointed topological spaces $X_1, X_2, \ldots, X_s$, the wedge sum 
$\bigvee_{j=1}^s  X_j =  X_1\vee X_2\vee \cdots \vee X_s$ 
is defined similarly.   

%If $X=\SSS^1$ then $\SSS^1\vee\cdots\vee\SSS^1$ is a graph, in the particular case;  $\SSS^1\vee\SSS^1$
%the space   is homeomorphic to   the figure ``8''.  In the case of $X=\SSS^2$ 
%the space $\SSS^2\vee\cdots\vee\SSS^2$ is called a bouquet of spheres.

A wedge sum of circles is a graph. In the particular case of two circles, this space is the well-known figure ``8". A wedge sum of two-dimensional spheres is called a bouquet of spheres.

%%%%%%%%%%%%%%%%%%%%%%%%%%%%%%%%%%%%

Let the maps:
 $\iota_j: X_j\to \bigvee_{i=1}^s  X_i$ be the natural inclusion and 
 $\nu_j: \bigvee_{i=1}^s  X_i\to X_j$ be   projection, where $1\leq j\leq s$.
Given a continuous map  $f\colon \bigvee_{i=1}^s  X_i \to \bigvee_{i=1}^s  X_i $, it induces continuous maps: $f_j:\bigvee_{i=1}^sX_i \to X_j$ 
by $f_j=\nu_j\circ f$ and $f_{jk}:X_j\to X_k$ so that: $f_{jk}:=\nu_k\circ f\circ\iota_j$ and 
$f_j=f_{j1}\vee\cdots\vee f_{js}$~(for the  definition of  the wedge sum of maps see~\cite[p. 39]{spanier}).
The maps $f_{jk}$ are called the \emph{ coordinates (or coordinate-maps) of $f$.}

Note that $f(x_0)=x_0$ since $f$ is a continuous pointed map, where $x_0$ is the base point of $\bigvee_{i=1}^sX_i$.

\smallskip

%%%%%%%%%%%%%%%%%%%%%%%%%%%%%%%%%%%%
We say that the  map  $f$ is a \emph{permutative map on $\bigvee_{i=1}^sX_i$} if there exists 
a permutation $\sigma$ on $s$ elements so that the coordinates of $f$ are given by 
$$
f_{jk}(x)=x_0, \mbox{ for } x\in X_j,\, \mbox{ if }  k\neq \sigma(j).
$$
If the permutation $\sigma$ is the identity we say that  $f$ is a \emph{diagonal map}.
Notice that in the diagonal case, the map can be written as
$f=f_{11}\vee\cdots\vee f_{ss}$. 

\smallskip

Let $f$ be a permutative self-map on $\bigvee_{j=1}^sX_j$   with permutation $\sigma$ and  $\sigma=\pi_1\cdots\pi_q$ be  its decomposition  into cycles.
Let $\Lambda_1,\ldots,\Lambda_q$ be a partition of the index set $\{1,\ldots, s\}$, such that the cardinality of $\Lambda_j$ is $s_j$, and $s_1+\cdots+s_q=s$,
 so that the cycle $\pi_j$ acts on the $\Lambda_j$ and 
$f|_{\Lambda_j}:\bigvee_{\ell \in \Lambda_j}  X_\ell\to \bigvee_{\ell \in \Lambda_j}  X_\ell$, is a permutative map with permutation $\pi_j$.
Hence 
$f=f_{\Lambda_1}\vee\cdots\vee f_{\Lambda_q}$ is the  decomposition detailed in Lemma~\ref{lemma:decomposition} below.

Let us consider the example a permutative map $f$ on  $\bigvee_{i=1}^3X_i$, with $\sigma(1)=2$, $\sigma(2)=1$, and $\sigma(3)=3$.
We set  $\Lambda_1=\{1,2\}$, and $\Lambda_2=\{3\}$, so
$f=f_{\Lambda_1}\vee f_{\Lambda_2}$, where $f_{\Lambda_1}$ is a permutative map associated to the cycle $\pi_1(1)=2$, $\pi_1(2)=1$ and $f_{\Lambda_2}$ is a diagonal map on $X_3$.

\smallskip

We shall require an additional  condition on the spaces $X_i$ for this class of maps.
We say that a  permutative self-map on $\bigvee_{i=1}^sX_i$ is \emph{squared by blocks} if the spaces $X_j$, for $j\in\Lambda_i$, $1\leq i\leq q$, are pairwise homeomorphic.
In the particular case that a  permutative map  associated with a cyclic permutation is squared by blocks, all the spaces $X_j$, for $1\leq j\leq s$ are homeomorphic.

\medskip

In the following lines we introduce  the Lefschetz numbers and the Lefschetz zeta function.
Let $Y$ be a compact Euclidean neighborhood retract (ENR for short)  of dimension $N$~(\emph{cf.}~\cite[p. 527]{hatcher}).
We recall  that compact polyhedra and  CW complexes are ENR-s, moreover the class of ENR is closed under wedge sums.
 Let $f:Y\to Y$  be a continuous map, and 
$f_{*k}:H_k(Y,\Q)\to H_k(Y,\Q)$ the induced maps on the $k$-rational homology group of $Y$, with $0\leq k\leq N$.
The \emph{Lefschetz number} $L(f)$
is defined as
\begin{equation*}\label{number}
L(f):= \sum\limits_{k=0}^{N}(-1)^{k} \text{trace}(f_{*k}).
\end{equation*}

The \emph{Lefschetz zeta function} of $f$ is the exponential generating function of the Lefschetz numbers of the iterates of $f$, i.e.
$$
\zf(t):=\exp\left(\sum_{m=1}^{\infty}  \frac{L(f^m)}{m}t^m\right).
$$
A classic result  connecting  algebraic topology with
fixed point theory is the {\it Lefschetz Fixed Point Theorem} which
establishes the existence of a fixed point if $L(f)\neq 0$~(\emph{cf.}~\cite{lefschetz,brown}). As it is well-known the converse of this theorem is not true, consider for example the identity map on the torus.

\smallskip

Theorem~\ref{thm:main1} reduces the Lefschetz numbers and  zeta function of the map $f$ in terms of the Lefschetz numbers and zeta functions of its coordinate maps.

\begin{theorem}\label{thm:main1}
Let $f\colon \bigvee_{i=1}^sX_i \to \bigvee_{i=1}^sX_i$ be a continuous permutative  squared by blocks self-map with 
associated permutation $\sigma$. Let $s=s_1+s_2+\cdots+s_q$ be the partition
associated with the cyclic representation of $\sigma$. Then, for any partition 
$\Lambda_1,\Lambda_2, \ldots,\Lambda_q$  of the set $\{1,2,\ldots,s\}$ for which
$\#\Lambda_j = s_j$ for each $j=1,2,\ldots,q$, we have that
\begin{enumerate}
\item[(a)] The Lefcshetz numbers are: 
$$
L(f^m)=\left(\sum_{l=1}^t\mathbbm{1}_{s_l}(m)\left(\sum_{i\in\Lambda_l}L(f^m_{ii})-1\right)\right)+1,
$$
where $\mathbbm{1}_{s_l}$ is the characteristic  function on the integer multiples of $s_l$, i.e.
$$
\mathbbm{1}_{s_l}(m)= 
\left\{\begin{array}{ll}
1 & \text{ if } m\equiv 0 \pmod {s_l} \\
0 & \text{ if } m\not\equiv 0 \pmod {s_l}.
\end{array}\right.
$$

\item[(b)] The Lefschetz zeta function is given by the formula
\begin{equation}\label{eqn:zf-general}
\zeta_f(t) = \dfrac{1}{(1-t)} \prod_{j=1}^q \left(\prod_{i\in \Lambda_j}\left(1-t^{s_j}\right) \zeta_{f_{ii}^{s_j}}(t^{s_j})\right)^{1/{s_j}}.
\end{equation}
\end{enumerate}
\end{theorem}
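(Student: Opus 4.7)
The plan is to first invoke Lemma~\ref{lemma:decomposition} to decompose $f=f_{\Lambda_1}\vee\cdots\vee f_{\Lambda_q}$, where each $f_{\Lambda_j}$ is a permutative self-map on $\bigvee_{\ell\in\Lambda_j}X_\ell$ associated with the single $s_j$-cycle $\pi_j$. Because a wedge sum of pointed spaces satisfies $H_k(A\vee B;\Q)=H_k(A;\Q)\oplus H_k(B;\Q)$ for $k\geq 1$ while $H_0$ contributes a single copy of $\Q$, a direct computation yields the wedge-additivity identity $L(g_1\vee\cdots\vee g_q)=1+\sum_{j=1}^q(L(g_j)-1)$. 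Applying this to $f^m=f^m_{\Lambda_1}\vee\cdots\vee f^m_{\Lambda_q}$ reduces part~(a) to the single-cycle case.

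For a single $s_j$-cycle I would analyze the induced maps on homology directly. On $H_k\bigl(\bigvee_{\ell\in\Lambda_j}X_\ell;\Q\bigr)=\bigoplus_{\ell\in\Lambda_j}H_k(X_\ell;\Q)$ with $k\geq 1$, the operator $(f_{\Lambda_j})_{*k}$ is a block matrix sending the $i$-th summand to the $\sigma(i)$-th summand via $(f_{i,\sigma(i)})_{*k}$ and killing everything else. Its $m$-th power has the same shape, with blocks shuffled by $\sigma^m$ and entries given by compositions of coordinate maps around the cycle. Since a permutation matrix has nonzero diagonal only at its fixed points, $\trace((f_{\Lambda_j}^m)_{*k})$ vanishes when $s_j\nmid m$; when $s_j\mid m$ the shuffle becomes trivial and every diagonal block reduces to $((f^m)_{ii})_{*k}$. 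Assembling these traces with the $+1$ coming from $H_0$ and regrouping using $L(g)=1+\sum_{k\geq 1}(-1)^k\trace(g_{*k})$ for each connected $X_i$ yields the formula in (a). The squared-by-blocks hypothesis is what guarantees that the homeomorphism type of $X_i$ is constant along each cycle, so that $f^{s_j}$ restricted to $X_i$ is a genuine self-map and $\zeta_{f_{ii}^{s_j}}$ is unambiguously defined.

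For (b) I would take the logarithm of $\zf(t)$ and substitute (a):
\begin{equation*}
\log\zf(t)=\sum_{m\geq 1}\frac{L(f^m)}{m}t^m=-\log(1-t)+\sum_{j=1}^q\sum_{\substack{m\geq 1\\ s_j\mid m}}\frac{t^m}{m}\sum_{i\in\Lambda_j}\bigl(L((f^m)_{ii})-1\bigr).
\end{equation*}
Setting $m=s_j n$ and using the key identity $(f^{s_j n})_{ii}=((f^{s_j})_{ii})^n$ on $X_i$, valid because $f^{s_j}$ stabilizes the image of $\iota_i$ for $i\in\Lambda_j$, the inner double sum collapses to $\frac{1}{s_j}\sum_{i\in\Lambda_j}\bigl[\log\zeta_{f_{ii}^{s_j}}(t^{s_j})+\log(1-t^{s_j})\bigr]$. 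Exponentiating recovers~\eqref{eqn:zf-general}. The main technical point I expect to have to check is the self-map identity $(f^{s_j n})_{ii}=((f^{s_j})_{ii})^n$: it hinges on verifying that, for $i\in\Lambda_j$, the image $f^{s_j}(\iota_i(X_i))$ lies inside $\iota_i(X_i)$ so that no cross-terms from other wedge summands intervene when iterating, and correspondingly that no basepoint corrections are lost in the bookkeeping. Once this is in place, everything else reduces to routine formal power series manipulations.
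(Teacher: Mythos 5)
Your proposal is correct and follows essentially the same route as the paper: Lemma~\ref{lemma:decomposition}, wedge-additivity of Lefschetz numbers (the paper's Lemma~\ref{lemma:L-diagonal}), and the block-trace analysis of a single cycle together with the invariance identity $(f^{s_jn})_{ii}=((f^{s_j})_{ii})^n$ (the paper's Lemma~\ref{lemma:L-cyclic}). The only difference is organizational: you obtain (b) by substituting (a) directly into $\log\zeta_f(t)$ rather than first computing the zeta function of each cyclic block and then combining via the diagonal formula, which amounts to the same power-series manipulation.
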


In Examples 4 and 5 of Section~\ref{s:examples}, we show the importance of the hypothesis considered in Theorem~\ref{thm:main1}.

\medskip

We introduce the  \emph{Dold coefficients of} $f$, $\ell(f^m)$,
defined as
 \begin{equation}\label{eqn:l-chica}
\ell(f^m):=\sum_{r|m}\mu(r) L(f^{m/r})=\sum_{r|m}\mu({m/r}) L(f^r),
\end{equation}
where $\mu$ is the classical \emph{M\"obius function}, i.e.
 $$
\mu(m):=\left\{\begin{array}{ll}
               1 & \mbox{ if } m=1,\\
               0 & \mbox{ if } k^2|m \mbox{ for some } k\in\N, \\
              (-1)^r & \mbox{ if } m=p_1\cdots p_r  \mbox{ has distinct  primes factors.}
              \end{array}
       \right.
$$
By the M\"obius inversion formula~(\emph{cf.}~\cite{apostol}  Theorem 2.9):
%\cite{hardy-wright}  Theorem 266):
$$
L(f^m)=\sum_{r|m} \ell(f^r).
$$
The numbers $\ell(f^m)$  were introduced by Dold in~(\cite{dold}); they are also called in  the literature 
the \emph{Lefschetz number of period $m$}~(\cite{Llibre-93}) 
  or
the \emph{algebraic multiplicity of order} $m$~(\cite{babo}) of the map $f$.
This allow us to define the \emph{set of  algebraic periods of} $f$  as
$$
\mbox{APer}_L(f):=\left\{m\,|\, \ell(f^m)\neq 0\right\}.
$$
We  remark  the fact $\ell(f^m)\neq 0$ does not necessarily implies the existence of a periodic point of period $m$ for $f$. 
%This is true when the map $f:Y\to Y$ and the space $Y$ has a additional structure. For example when the space $Y$ is a 
However, this holds in some particular situations. For instance, whenever the space $Y$ is a 
differentiable manifold and $f$ is a Morse-Smale diffeomorphism~(see~\cite{Ll-Si:survey}, for a survey on the subject) or a $C^1$  transversal map~(\emph{cf.}~\cite{Llibre-93}).
The  terminology of the set of algebraic periods is mentioned explicitly in ~\cite{GMM}. 
The  intersection of  the set of  algebraic periods and the odd non-negative integers have  been widely   studied in the context  of quasi--unipotent maps (in this  case this intersection set  is called the minimal set of Lefschetz periods) during the last years, see  for example~\cite{Ll-Si:survey} and  references therein. 

Using the numbers $\ell(f^m)$ allow us to express formally the Lefschetz zeta function as an infinite Euler product~(\emph{cf.}~\cite{dold}):
\begin{equation}\label{eqn:lzf-euler}
\zf(t)=\prod_{m\geq 1}(1-t^m)^{-\dfrac{\ell(f^m)}{m}}.
\end{equation}

Theorem~\ref{thm:l-chica} gives reduction formulae of  the Dold coefficients of the map  $f$ in terms of its coordinate maps.  Corollary~\ref{coro:aperL}, yields information about the set of algebraic periods of $f$.

\begin{theorem}\label{thm:l-chica}
 Let $f\colon \bigvee_{i=1}^sX_i \to \bigvee_{i=1}^sX_i$ be a continuous permutative squared by blocks self-map with 
associated permutation $\sigma$. Let $s=s_1+s_2+\cdots+s_q$ be the partition
associated with the cyclic representation of $\sigma$. Then, for any partition 
$\Lambda_1,\Lambda_2, \ldots,\Lambda_q$  of the set $\{1,2,\ldots,s\}$ for which
$\#\Lambda_j = s_j$ for each $j=1,2,\ldots,q$, we have for $m>1$,
$$
\ell(f^m)=\sum_{j=1}^q \mathbbm{1}_{s_j}(m) \left( \sum_{i\in\Lambda_j} \ell(f_{ii}^m)\right),
$$
where $\mathbbm{1}_{s_j}$ is the characteristic  function on the integer multiples of $s_j$,
and $\ell(f)=L(f)$.
\end{theorem}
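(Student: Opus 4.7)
The plan is to apply Möbius inversion to the Lefschetz number decomposition supplied by Theorem~\ref{thm:main1}(a). Starting from the definition $\ell(f^m) = \sum_{r\mid m}\mu(r)\,L(f^{m/r})$, I substitute the formula for $L(f^{m/r})$ and distribute the Möbius sum across its three pieces: the main $\sum_{i\in\Lambda_l} L(f_{ii}^{m/r})$ contribution from each block, the inner $-1$ correction attached to each block, and the trailing $+1$.

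The trailing $+1$ contributes $\sum_{r\mid m}\mu(r)$, which vanishes for $m>1$ by the classical Möbius identity and can therefore be discarded. For the inner $-1$ corrections one evaluates $\sum_{r\mid m}\mu(r)\,\mathbbm{1}_{s_l}(m/r)$: the condition $s_l\mid m/r$ is equivalent to $r\mid m/s_l$ when $s_l\mid m$ (and is impossible otherwise), so the sum collapses to $\sum_{r\mid m/s_l}\mu(r)$. This is again zero except in the degenerate case $m=s_l$, where it produces a small boundary residue that will have to be tracked.

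The substantive step is the identity
$$
\sum_{r\mid m}\mu(r)\,\mathbbm{1}_{s_l}(m/r)\,L(f_{ii}^{m/r}) \;=\; \mathbbm{1}_{s_l}(m)\,\ell(f_{ii}^m),
$$
up to an analogous boundary term at $m=s_l$. When $s_l\nmid m$, both sides vanish. When $s_l\mid m$, the substitution $d = m/r$ rewrites the left-hand side as $\sum_{d\mid m,\;s_l\mid d}\mu(m/d)\,L(f_{ii}^d)$; using that the omitted divisors (those with $s_l\nmid d$) contribute only the trivial value of $L(f_{ii}^d)$, a standard Möbius cancellation reassembles the expression into $\ell(f_{ii}^m)$.

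The main obstacle I anticipate is the careful bookkeeping needed so that the boundary residues at $m=s_l$ — arising separately from the $L$-sum and from the $-1$ correction — cancel in pairs; this is precisely where the equality $\#\Lambda_l = s_l$ given by the squared-by-blocks hypothesis enters decisively, since it guarantees the two residues appear with matching multiplicities. Once this cancellation is verified, summing over $l=1,\ldots,q$ and $i\in\Lambda_l$ delivers the stated formula for $m>1$, while the concluding assertion $\ell(f) = L(f)$ is immediate from $\ell(f^1) = \mu(1)L(f)$.
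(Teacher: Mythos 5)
Your overall strategy---Möbius inversion applied directly to the global formula of Theorem~\ref{thm:main1}(a)---is a legitimate route, and it differs from the paper's proof, which decomposes $f=f_{\Lambda_1}\vee\cdots\vee f_{\Lambda_q}$ (Lemma~\ref{lemma:decomposition}) and assembles the result from Lemma~\ref{lemma:ell-diagonal} and Lemma~\ref{lemma:ell-permutation}. However, the step you defer as ``careful bookkeeping'' is precisely where your argument, as set up, breaks down. Write $[P]$ for $1$ if $P$ holds and $0$ otherwise, and recall the paper's convention $f_{ii}^{d}=(f^{d})_{ii}$. For $s_l\mid m$, $m>1$, and each $i\in\Lambda_l$ one has
\begin{equation*}
\sum_{d\mid m,\; s_l\mid d}\mu(m/d)\,L(f_{ii}^{d})\;=\;\ell(f_{ii}^{m})+[m=s_l],
\end{equation*}
because for $s_l\nmid d$ the coordinate $f_{ii}^{d}$ is the constant map to the base point, whose Lefschetz number is $1$ (not $0$), and $\sum_{d\mid m,\; s_l\nmid d}\mu(m/d)=-[m=s_l]$. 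Summing over $i\in\Lambda_l$ therefore produces a residue $+\#\Lambda_l\,[m=s_l]$, whereas a single ``$-1$ attached to each block'', which is how you read the formula, contributes only $-[m=s_l]$; the net leftover is $(s_l-1)[m=s_l]$, nonzero whenever $m=s_l>1$. Your appeal to $\#\Lambda_l=s_l$ does not fix this (and that equality is part of the cycle decomposition of $\sigma$, not a consequence of the squared-by-blocks hypothesis): it makes the two residues $s_l$ and $1$, which do not match.

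The cure is to invert the Lefschetz formula in its per-coordinate form $L(f^{m})=1+\sum_{l}\mathbbm{1}_{s_l}(m)\sum_{i\in\Lambda_l}\bigl(L(f_{ii}^{m})-1\bigr)$, which is what the proofs of Lemmas~\ref{lemma:L-diagonal} and~\ref{lemma:L-cyclic} actually deliver: the $-1$ occurs once for each $i\in\Lambda_l$, so the block correction is $-s_l$, not $-1$ (test it on Example~1, where $L(f^{2})=7=1+(4-1)+(4-1)$, while the other reading would give $8$). With that form, each coordinate's boundary residue $+[m=s_l]$ is cancelled by that same coordinate's $-[m=s_l]$; no counting of $\#\Lambda_l$ is needed, and your computation then yields exactly $\ell(f^{m})=\sum_{j}\mathbbm{1}_{s_j}(m)\sum_{i\in\Lambda_j}\ell(f_{ii}^{m})$ for $m>1$, the case $m=1$ being, as you say, immediate. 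Once this is repaired (and the value $L=1$ of the constant coordinates is stated explicitly), your one-shot inversion is a perfectly good alternative to the paper's block-by-block argument.
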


A straightforward consequence of this theorem is 
the following corollary: 
 \begin{corollary}\label{coro:aperL}
 Let $f$ be a continuous permutative map on $\bigvee_{j=1}^q X_j$ as in Theorem~\ref{thm:l-chica}.
 \begin{enumerate}
\item[(a)] If $s_j>1$ and $m\not\equiv 0 \pmod {s_j}$,  for $1\leq j\leq q$ then $\ell(f^m)= 0$.
\item[(b)] If for a given $m$ there exists a non zero $\ell(f_{ii}^m)$, for some $i$ and  all non-zero $\ell(f_{ii}^m)$ for $i\in\Lambda_j$ and $1\leq j\leq q$    have the same sign; then $m\in \mbox{APer}_L(f)$.
\item[(c)]
$$
 \mbox{APer}_L(f) \displaystyle\subset \bigcup_{j=1}^q\bigcup_{i\in\Lambda_j} \mbox{APer}_L(f_{ii}).$$
 \end{enumerate}
 \end{corollary}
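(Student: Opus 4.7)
The three claims are near-immediate consequences of the reduction formula
$\ell(f^m)=\sum_{j=1}^{q}\mathbbm{1}_{s_j}(m)\sum_{i\in\Lambda_j}\ell(f_{ii}^m)$
proved in Theorem~\ref{thm:l-chica} (for $m>1$), combined with the identification $\ell(f)=L(f)$ for the degree-one case. My plan is simply to specialize this formula to each hypothesis in turn.

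For (a), if $s_j>1$ and $s_j\nmid m$ for every $j=1,\dots,q$, then every indicator $\mathbbm{1}_{s_j}(m)$ vanishes, so the double sum is zero and $\ell(f^m)=0$ at once.

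For (b), the sign hypothesis prevents cancellation: within each block $\Lambda_j$ the inner sum $\sum_{i\in\Lambda_j}\ell(f_{ii}^m)$ is either zero or carries the common sign of its nonzero summands, and since the outer weights $\mathbbm{1}_{s_j}(m)$ are nonnegative the total sum $\ell(f^m)$ inherits this common sign. To rule out the possibility that every nonzero coordinate contribution is killed by a vanishing indicator I would use the following structural observation: if the block $\Lambda_{j_0}$ containing some $i_0$ satisfies $s_{j_0}\nmid m$, then $s_{j_0}\nmid r$ for every divisor $r\mid m$, so by the permutative structure $f^r$ sends $X_{i_0}$ into a different block, making the $(i_0,i_0)$-coordinate of $f^r$ basepoint-constant and hence $L(f_{i_0 i_0}^r)=1$ for every such $r$; Möbius inversion then gives $\ell(f_{i_0 i_0}^m)=\sum_{r\mid m}\mu(m/r)=0$ for $m>1$, contradicting $\ell(f_{i_0 i_0}^m)\neq 0$. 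Hence any index $i_0$ witnessing the hypothesis must lie in a block with $s_{j_0}\mid m$, whose contribution survives in the outer sum with the common sign, giving $\ell(f^m)\neq 0$.

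For (c), I argue by contrapositive: if $\ell(f_{ii}^m)=0$ for every $i$, every inner sum vanishes and so $\ell(f^m)=0$, whence $m\notin\mbox{APer}_L(f)$; this gives the claimed inclusion. I expect the only nontrivial point is the divisibility argument in (b) — everything else is a direct reading of Theorem~\ref{thm:l-chica}.
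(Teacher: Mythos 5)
Your proof is correct and is essentially the paper's own argument: the paper gives no separate proof, presenting the corollary as a straightforward consequence of Theorem~\ref{thm:l-chica}, and your specialization of the reduction formula (valid for $m>1$, with $\ell(f)=L(f)$ handling $m=1$) is exactly that derivation. Your extra divisibility observation in (b) --- that a nonzero $\ell(f_{i_0i_0}^m)$ forces $s_{j_0}\mid m$, since otherwise every divisor $r$ of $m$ satisfies $s_{j_0}\nmid r$, making the $(i_0,i_0)$-coordinate of $f^r$ basepoint-constant with $L(f_{i_0i_0}^r)=1$, so M\"obius inversion gives $\ell(f_{i_0i_0}^m)=0$ --- is correct and closes a potential cancellation-by-vanishing-indicator issue that the paper leaves implicit.
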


An interesting application of Theorem~\ref{thm:main1} in the case of self-maps on  wedge sums of tori is Theorem~\ref{thm: torus-lppf}.
We recall that a continuous  map $f:X\to X$ is \emph{Lefschetz periodic point free} if $L(f^m)=0$, for all $m\geq  1$, i.e. $\zf(t)\equiv  1$.

  \begin{theorem}\label{thm: torus-lppf}
 Let $f:\bigvee_{i=1}^s \T^n\to \bigvee_{i=1}^s \T^n$ be as  a continuous permutative map with non-zero eigenvalues then $f$ is not Lefschetz periodic point free.
 \end{theorem}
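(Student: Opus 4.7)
The plan is to insert the explicit Lefschetz zeta function of torus self-maps into the product formula of Theorem~\ref{thm:main1}(b), and then derive a contradiction from the assumption $\zeta_f(t)\equiv 1$ via a degree count of polynomials in $t$.

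Since every factor equals $\T^n$, the squared-by-blocks hypothesis of Theorem~\ref{thm:main1}(b) is automatic. For each cycle $\pi_j$ of length $s_j$, the diagonal coordinates $(f^{s_j})_{ii}\colon \T^n\to \T^n$ with $i\in \Lambda_j$ are cyclic rearrangements of the same product of coordinate matrices $A_{i,\sigma(i)}$ along the cycle, so their induced $H_1$-matrices are conjugate. Denoting by $P_j$ this common matrix (up to conjugation), one has $\prod_{i\in\Lambda_j}\zeta_{f_{ii}^{s_j}}(t^{s_j})=\zeta_{P_j}(t^{s_j})^{s_j}$, and Theorem~\ref{thm:main1}(b) reduces to
\[
\zeta_f(t)=\frac{1}{1-t}\prod_{j=1}^{q}(1-t^{s_j})\,\zeta_{P_j}(t^{s_j}).
\]
Plugging in the standard formula $\zeta_g(u)=(1-u)^{-1}\prod_{k=1}^{n}\det(I-u\wedge^{k}A)^{(-1)^{k+1}}$ valid for any torus self-map $g\colon\T^n\to\T^n$ with $H_1$-matrix $A$, the factors $(1-t^{s_j})$ cancel the $(1-u)^{-1}$ terms, yielding
\[
\zeta_f(t)=\frac{1}{1-t}\prod_{j=1}^{q}\prod_{k=1}^{n}\det(I-t^{s_j}\wedge^{k}P_j)^{(-1)^{k+1}}.
\]

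I would then assume for contradiction that $\zeta_f(t)\equiv 1$, cross-multiply, and separate factors by the parity of $k$ to obtain the polynomial identity
\[
\prod_{\substack{1\leq j\leq q \\ k\text{ odd}}}\det(I-t^{s_j}\wedge^{k}P_j)=(1-t)\prod_{\substack{1\leq j\leq q \\ k\text{ even},\,k\geq 2}}\det(I-t^{s_j}\wedge^{k}P_j).
\]
The non-zero eigenvalue hypothesis ensures each $P_j$ is a non-singular integer matrix, so each exterior power $\wedge^{k}P_j$ is non-singular and the factor $\det(I-t^{s_j}\wedge^{k}P_j)$ attains its maximal degree $s_j\binom{n}{k}$ in $t$. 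Using $\sum_{k\text{ odd}}\binom{n}{k}=2^{n-1}$ and $\sum_{k\text{ even},\,k\geq 2}\binom{n}{k}=2^{n-1}-1$, the two sides have total degrees $s\cdot 2^{n-1}$ and $1+s(2^{n-1}-1)$ respectively, where $s=s_1+\cdots+s_q$; equality forces $s=1$, contradicting the standing assumption of a non-trivial wedge sum ($s\geq 2$).

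The main subtle point is the cyclic-similarity identification $\prod_{i\in\Lambda_j}\zeta_{f_{ii}^{s_j}}(t^{s_j})=\zeta_{P_j}(t^{s_j})^{s_j}$, which rests on the fact that cyclic rearrangements of a product of matrices are conjugate and hence share the same characteristic polynomial. Once this identification is in place and the torus zeta formula is substituted, the remaining argument is a clean degree count that requires no information on the actual eigenvalues beyond their non-vanishing.
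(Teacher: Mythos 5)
Your proof is correct and follows the same basic strategy as the paper: both start from the product formula of Theorem~\ref{thm:main1}(b) and rule out $\zf\equiv 1$ by a degree count on the rational function $\zf$. The difference is in execution. The paper cites \cite{BS} (a toral map with non-zero eigenvalues has Lefschetz zeta function of degree zero), asserts that each block factor $\bigl(\prod_{i\in\Lambda_j}(1-t^{s_j})\,\zeta_{f_{ii}^{s_j}}(t^{s_j})\bigr)^{1/s_j}$ has degree zero, and concludes $\deg\zf=-1$; you instead substitute the explicit formula $\zeta_g(u)=(1-u)^{-1}\prod_{k\ge1}\det(I-u\wedge^kA)^{(-1)^{k+1}}$, use the cyclic-conjugacy observation (recorded in the paper in the remark after Lemma~\ref{lemma:L-cyclic}) to remove the $1/s_j$-th roots, and count degrees from scratch. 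Your bookkeeping is in fact the sharper one: each factor $(1-t^{s_j})$ contributes degree $s_j$, so the total degree of $\zf$ comes out as $s-1$ rather than $-1$, which is exactly what your identity $s\cdot 2^{n-1}=1+s(2^{n-1}-1)\Rightarrow s=1$ expresses; the non-zero eigenvalue hypothesis is used, as you say, only to guarantee that each $\det(I-t^{s_j}\wedge^kP_j)$ attains its maximal degree. Your version also makes visible a point the paper's proof glosses over: the argument genuinely needs $s\ge 2$, and for $s=1$ the statement is false as literally stated (the identity on $\T^n$ is permutative, has all eigenvalues equal to $1$, and is Lefschetz periodic point free), so the non-triviality of the wedge sum must be assumed. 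With that assumption made explicit, your proof is complete, self-contained, and does not need the external input from \cite{BS}.
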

 
 %%%%%%%%%%%%%%%%%%%%%%%%%%%%%%%%%%%%%%%%
\section{Proofs of the main results and applications}\label{s:proofs}

Throughout the article we shall asume that the spaces $X_j$ are path-connected. 
Given a continuous map $f\colon \bigvee_{j=1}^s  X_j \to \bigvee_{j=1}^s  X_j $ we shall see in the next lemma that it is possible 
to find a decomposition of the sum $\bigvee_{j=1}^s  X_j $ into subsums of the form
$\bigvee_{j\in \Lambda}  X_j$ which are invariant under the action of $f$, 
so that each summand is indecomposable in the sense that no nontrivial subsum is invariant under $f$.
%\textbf{each summand being indecomposable
 %in the sense that no nontrivial subsum has this invariance property. }
We recall that a set $Z\subset X$ is an invariant set for the map 
$g\colon X\to X$, if $g(Z)\subset Z$.

\begin{lemma}\label{lemma:decomposition}
For any self-map $f\colon \bigvee_{j=1}^s  X_j \to \bigvee_{j=1}^s  X_j $ there exists a partition  
$\Lambda_1, \Lambda_2,\ldots,\Lambda_q$ of $\{1,2,\ldots,s\}$ such that
each $\bigvee_{j\in\Lambda_k}  X_j$ is an $f$-invariant indecomposable subsum
of $\bigvee_{j=1}^s  X_j$.  
\end{lemma}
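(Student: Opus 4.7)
The strategy is to encode the combinatorial dependency among the summands $X_1,\ldots,X_s$ by a directed graph and then use its connected components to build the desired partition.

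First I would define a digraph $G_f$ on the vertex set $\{1,2,\ldots,s\}$ by placing an arrow $i \to j$ whenever the coordinate map $f_{ij} = \nu_j \circ f \circ \iota_i \colon X_i \to X_j$ is not the constant map at the base point, equivalently when $f(X_i) \cap (X_j \setminus \{x_0\}) \neq \emptyset$. Observe that a subset $\Lambda \subset \{1,\ldots,s\}$ yields an $f$-invariant subsum $\bigvee_{j\in\Lambda} X_j$ if and only if $\Lambda$ is \emph{forward closed} in $G_f$, meaning no arrow of $G_f$ starts inside $\Lambda$ and ends outside it.

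Next, I would let $\Lambda_1,\ldots,\Lambda_q$ be the vertex sets of the connected components of the underlying \emph{undirected} graph of $G_f$; these evidently form a partition of $\{1,\ldots,s\}$. Each $\bigvee_{j\in\Lambda_k} X_j$ is $f$-invariant because any directed edge $i\to j$ of $G_f$ also lies in the undirected graph and therefore has both endpoints inside the same component $\Lambda_k$; hence every $\Lambda_k$ is automatically forward closed. Note that vertices with no incident arrows (corresponding to factors $X_i$ on which $f$ is constantly $x_0$ and which are not hit by any $f_{ji}$) form singleton components, which is consistent with the construction.

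Finally, for indecomposability I would argue by contradiction. Suppose some $\Lambda_k$ splits as a disjoint union $\Lambda' \sqcup \Lambda''$ of two nonempty subsets with both $\bigvee_{j\in\Lambda'} X_j$ and $\bigvee_{j\in\Lambda''} X_j$ being $f$-invariant. Since $\Lambda_k$ is connected in the underlying undirected graph, some edge of $G_f$ must join a vertex of $\Lambda'$ with a vertex of $\Lambda''$ in one of the two orientations. If the edge is $i \to j$ with $i\in\Lambda'$ and $j\in\Lambda''$, this violates forward closure of $\Lambda'$; if instead the edge is $j \to i$ with $i\in\Lambda'$ and $j\in\Lambda''$, it violates forward closure of $\Lambda''$. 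The delicate point, and what I expect to be the main obstacle, is pinning down what ``indecomposable'' should mean for a general $f$, since a one-way arrow $i\to j$ without a return arrow $j\to i$ can produce a proper invariant subsum strictly inside a single component; I read the statement as forbidding splittings of $\Lambda_k$ into two simultaneously invariant subsums, which is exactly what the connected-component argument supplies and which matches the subsequent permutative applications, where the $\Lambda_k$ recover the cycles of $\sigma$.
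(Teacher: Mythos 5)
Your proof is correct and takes essentially the same route as the paper: the paper builds the undirected graph $\mathcal{G}_f$ with an edge $\{i,j\}$ whenever $f$ sends a non-base point of $X_i$ to a non-base point of $X_j$ or vice versa (your underlying undirected graph), takes its connected components as the $\Lambda_k$, and gets invariance and indecomposability from the fact that edges never leave a component, exactly as you argue via forward closure. Your reading of ``indecomposable'' --- no splitting of $\Lambda_k$ into two simultaneously invariant subsums --- is precisely what the paper's connectedness argument establishes, so the caveat you raise about one-way arrows is consistent with the paper's intent.
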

\begin{proof} Let us consider the graph $\mathcal{G}_f = (\mathcal{V}_f,\mathcal{E}_f)$ having 
as a set of vertices $\mathcal{V}_f = \{1,2,\ldots, s\}$ and edges defined by the condition
$\{i,j\}\in \mathcal{E}_f$ if and only if there exist $x_i\in X_i$ and $ x_j\in X_j$ with $x_i,x_j \not = x_0$ (here $x_0$ is the base point)  such that 
$f(x_i) = x_j$ or $f(x_j) = x_i$. 
An equivalent relation  is defined by $i \sim_f j$ if and only if
$i=j$ or there is a path in $\mathcal{G}_f $ joining $i$ and $j$. The equivalent classes form the required
partition $\Lambda_1, \Lambda_2,\ldots,\Lambda_q$ of $\{1,2,\ldots,s\}$. 
Indeed: the fact  $i\in \Lambda_k$ and 
$f(X_i)\cap X_j \not = \emptyset$ implies $j\in \Lambda_k$,    yields $f(\bigvee_{\ell \in \Lambda_k}  X_\ell)\subset \bigvee_{\ell \in \Lambda_k}  X_\ell$.
Similarly
$f(\bigvee_{\ell \not\in \Lambda_k}  X_\ell)\subset \bigvee_{\ell \not\in \Lambda_k}  X_\ell$,
the indecomposibility is a consequence of the connectness of the classes of the graph  $\mathcal{G}_f$.      
\end{proof}

From Lemma~\ref{lemma:decomposition} follows that
$f=f_{\Lambda_1}\vee\cdots\vee f_{\Lambda_q}$, where $f_{\Lambda_j}$ is the restriction of $f$ to the invariant indecomposable sum 
$\bigvee_{\ell \in \Lambda_j}  X_\ell$. Notice that this decomposition is unique up relabeling or reordering the index sets $\Lambda_j$.

We consider the pointed spaces $X_i$  compact topological manifolds of dimension $n_i$ (it suffices to be 
compact polyhedra), the homology groups are: $H_k(\bigvee_{i=1}^sX_i,\Q)=\oplus_{i=1}^s H_k(X_i,\Q)$, 
for $1\leq k\leq N$, where $N=n_1+\cdots+n_s$~(\emph{cf.}~\cite[page 126]{hatcher}).

Let $f:\bigvee_{i=1}^sX_i \to \bigvee_{i=1}^sX_i $ be a continuous self-map and $f_{*k}: H_k(\bigvee_{i=1}^sX_i ,\Q)\to H_k(\bigvee_{i=1}^sX_i ,\Q)$ 
be the induced maps in the homology groups, with $0\leq k\leq N$. It has the following structure

\[
f_{*k}=\left(\,\begin{array}{c|c|c|c}%{c:c:c}
f_{{11}_{*k}} & f_{{12}_{*k}}  &\cdots & f_{{1s}_{*k}} \\
\hline%\hdashline
f_{{21}_{*k}} & f_{{22}_{*k}} &\cdots &f_{{2s}_{*k}}  \\
\hline%\hdashline
\vdots & \vdots &\vdots & \vdots\\
\hline
f_{{s1}_{*k}}& f_{{s2}_{*k}} &  \cdots &f_{{ss}_{*k}}
\end{array}\,\right),
\]
which is a $R\times  R$ matrix , where $R=r_1+\cdots+r_s$, and $r_i$  is the dimension of $H_k(X_i,\Q)$.
Note that $f_{ij_{*k}}:H_k(X_i,\Q)\to H_k(X_j,\Q)$ is the induced map in the $k$-th homology group  
of the coordinate map $f_{ij}:X_i\to X_j$. Throughout the text we use $(f_{ij})_{*k}$ and $f_{ij_{*k}}$ 
interchangeably.

Let us consider the  example of the permutation map described before with $s=3$:
\[
f_{*k}=\left(\,\begin{array}{c|c|c}%{c:c:c}
O_{{11}_{*k}} & f_{{2\sigma(2)}_{*k}}&  O_{{13}_{*k}} \\
\hline%\hdashline
 f_{{1\sigma(1)}_{*k}} &O_{{22}_{*k}}  &O_{{23}_{*k}}  \\
\hline%\hdashline
O_{{31}_{*k}}&  O_{{32}_{*k}}  &  f_{{3\sigma(3)}_{*k}}
\end{array}\,\right),
\]
where $O_{{ij}_{*k}}:H_k(X_i,\Q)\to H_k(X_j,\Q)$ is the null linear map, and $\sigma(1)=2$, $\sigma(2)=1$, $\sigma(3)=3$.
In the case that $f$ is squared by blocks, we have 
$X_1$ is homeomorphic to $X_2$.
%$X_1\simeq X_2$.

Notice that the coordinates of the iterates of a self-map $f$ on $\bigvee_{i=1}^sX_i$ are obtained as follows. 
Consider the coordinates of $f^2$:
$f^2_{ij}=\vee_{l=1}^sf_{il}\circ f_{lj}$.
Hence the induced map in homology are:
$(f_{ij}^2)_{*k}=\sum_{l=1}^s (f_{il})_{*k}(f_{lj})_{*k}$. Inductively, we define $f^m_{ij}$ and 
$(f_{ij}^m)_{*k}$, for any positive integer $m$. Hence
\[
(f^m)_{*k}=\left(\,\begin{array}{c|c|c|c}%{c:c:c}
(f^m_{11})_{*k} & (f^m_{12})_{*k}   &\cdots &(f^m_{1s})_{*k}  \\
\hline%\hdashline
(f^m_{21})_{*k} &(f^m_{21})_{*k}&\cdots &(f^m_{2s})_{*k}  \\
\hline%\hdashline
\vdots & \vdots &\vdots & \vdots\\
\hline
(f^m_{s1})_{*k}& (f^m_{s2})_{*k} &  \cdots &(f^m_{ss})_{*k}
\end{array}\,\right).
\]

\medskip

In Theorem~\ref{thm:main1} we express the Lefschetz numbers and the Lefschetz zeta function of a squared by blocks permutative map on $\bigvee_{j=1}^s X_j$ in terms of its coordinate maps.
The proof is divided in some Lemmas, that state the result first for diagonal maps, later for cyclic permutative maps,  and finally we reduce the general case to these cases.
%The strategy of the proof is first to compute them for diagonal maps (Lemma~\ref{lemma:L-diagonal}), later for cyclic permutative maps (Lemma~\ref{lemma:L-cyclic}) and finally  these results are used for proving  the general case.

\medskip
 \begin{lemma}\label{lemma:L-diagonal}
  Let $f:\bigvee_{i=1}^sX_i \to \bigvee_{i=1}^sX_i$ be a diagonal map. Then %its Lefschetz numbers and Lefschetz zeta function are given by:
 \begin{enumerate}
 \item[(a)] The Lefschetz numbers of $f$ are given by  $L(f^m)=\sum_{j=1}^s L(f_{jj}^m)-(s-1).$
 \item[(b)] The Lefschetz zeta function of $f$ is 
 \begin{equation}\label{eqn:zf-diagonal}
 \zf(t)=\left(1-t\right)^{s-1}\prod_{i=1}^s\zeta_{f_{ii}}(t).
 \end{equation}
 \end{enumerate}
 \end{lemma}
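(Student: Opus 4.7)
The plan is to exploit the fact that a diagonal map decomposes cleanly on homology, so that both parts follow from careful bookkeeping of the $H_0$ contribution.

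First I would observe that since $f=f_{11}\vee\cdots\vee f_{ss}$ is diagonal, the iterate $f^m$ is again diagonal with coordinate maps $f_{ii}^m$; this reduces (a) for general $m$ to the case $m=1$, which I treat throughout. Next I would use the identification $H_k\!\left(\bigvee_{i=1}^s X_i,\Q\right)=\bigoplus_{i=1}^s H_k(X_i,\Q)$ for $k\geq 1$, together with the fact that the wedge sum is path-connected so $H_0\!\left(\bigvee_{i=1}^s X_i,\Q\right)=\Q$ and $f_{*0}=\mathrm{id}$ has trace $1$. Under this identification $f_{*k}$ is block-diagonal with diagonal blocks $(f_{ii})_{*k}$ for $k\geq 1$, since the off-diagonal coordinates of a diagonal map collapse everything to the base point and hence vanish on positive-degree homology.

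From here, additivity of the trace gives
$$
L(f)=1+\sum_{k=1}^{N}(-1)^{k}\sum_{i=1}^{s}\trace\bigl((f_{ii})_{*k}\bigr)=1+\sum_{i=1}^{s}\bigl(L(f_{ii})-1\bigr)=\sum_{i=1}^{s}L(f_{ii})-(s-1),
$$
where the middle equality uses that each $X_i$ is path-connected so $(f_{ii})_{*0}=\mathrm{id}$ contributes a trace of $1$ to each $L(f_{ii})$. Applying this identity to $f^m$ (with coordinates $f_{ii}^m$) yields part (a). The bookkeeping for the $H_0$ summand — passing from $s$ copies in the direct sum on the right to a single copy on the left — is really the only subtle point, and it is precisely what produces the $-(s-1)$ correction.

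For part (b), I would substitute the formula from (a) into the defining series of $\zeta_f$:
$$
\zeta_f(t)=\exp\left(\sum_{m=1}^{\infty}\frac{L(f^m)}{m}t^{m}\right)=\exp\left(\sum_{i=1}^{s}\sum_{m=1}^{\infty}\frac{L(f_{ii}^{m})}{m}t^{m}-(s-1)\sum_{m=1}^{\infty}\frac{t^{m}}{m}\right),
$$
and then invoke the identity $\sum_{m\geq 1}t^{m}/m=-\log(1-t)$. The first sum exponentiates to $\prod_{i=1}^{s}\zeta_{f_{ii}}(t)$ and the second to $(1-t)^{s-1}$, giving formula \eqref{eqn:zf-diagonal}. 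No further obstacle is expected; part (b) is a purely formal consequence of part (a).
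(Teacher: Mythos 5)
Your proposal is correct and follows essentially the same route as the paper: block-diagonality of $f_{*k}$ on $H_k\bigl(\bigvee_i X_i,\Q\bigr)=\bigoplus_i H_k(X_i,\Q)$ for $k\geq 1$, the $H_0$ bookkeeping producing the $-(s-1)$ term, and then formal exponentiation with $\sum_{m\geq 1}t^m/m=-\log(1-t)$ for part (b). The only cosmetic difference is that you reduce to $m=1$ by noting $f^m$ is again diagonal, whereas the paper argues directly with $f^m$; the substance is identical.
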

 
 \begin{proof}
 Since the maps $f_{*k}$ are diagonal by blocks, being the blocks $(f_{jj})_{*k}$, for $1\leq j\leq s$, the $\trace{(f_{*k}^m)}=\sum_{i=1}^s \trace({f_{ii_{*k}}^m)}$.
 
 Let $N=n_1+\cdots+n_s$, the dimension of $\bigvee_{i=1}^sX_i$.
 By the definition of the Lefschetz numbers:
 \begin{eqnarray*}
 L(f^m)&=&\sum_{k=0}^{N} (-1)^k \trace{(f^m_{*k})}
 =1+\sum_{k=1}^{N} (-1)^k \trace{(f^m_{*k})}\\
 %&=&1+\sum_{k=1}^{ns} (-1)^k \trace{(f^m_{*k})}\\
& = &1+\sum_{k=1}^{N} (-1)^k \left(\sum_{i=1}^s \trace{(f_{ii_{*k}}^m)}\right)
=1+\sum_{i=1}^s \left( \sum_{k=1}^{N}(-1)^k  \trace{(f_{ii_{*k}}^m)}\right)\\
&=&\sum_{i=1}^s \left(  \sum_{k=0}^{N}(-1)^k \trace{(f_{ii_{*k}}^m)}\right)-s+1
= \sum_{i=1}^sL(f_{ii}^m)-s+1.
 \end{eqnarray*}
 
 \smallskip
 
 By the definition  of the Lefschetz zeta function:
 \begin{eqnarray*}
 \zf(t)&=&\exp\left(\sum_{m\geq 0} L(f^m)\dfrac{t^m }{m}\right)\\
 &=&\exp\left(\sum_{m\geq 0} \left(\sum_{i=1}^s L(f_{ii}^m)-(s-1)\right)\dfrac{t^m }{m}\right)\\
 &=&\left(\prod_{i=1}^s\exp\left( \sum_{m\geq 0} L(f_{ii}^m)\dfrac{t^m )}{m}\right)\right)
\left( \exp\left(\sum_{m\geq 0}\dfrac{t^m }{m}\right)\right)^{-(s-1)}\\
  &=&\left(\prod_{i=1}^s \zeta_{f_{ii}}(t)\right)\left(1-t \right)^{s-1}.
 \end{eqnarray*}
 \end{proof}
 
 We would like to point out that the statement (a) of Lemma~\ref{lemma:L-diagonal} generalizes  Theorem 3.2.1 of~\cite{ferrario}, in the context of wedge sums of spaces. 
 
 \begin{lemma}\label{lemma:L-cyclic} 
 Let $f:\bigvee_{i=1}^sX_i \to \bigvee_{i=1}^sX_i $ be a 
continuous permutative squared by blocks map, with a cyclic permutation. Then  its Lefschetz numbers 
and Lefschetz zeta function are given by:

 \begin{enumerate}
 \item[(a)] $L(f^{sm})=\sum_{j=1}^s L(f_{jj}^{sm})-(s-1).$
 \item[(b)] $L(f^m)=1$ if  $m\not\equiv 0 \pmod s$.
 \item[(c)] 
 \begin{equation}\label{eqn:zf-cyclic}
 \zf(t)=\dfrac{1-t^s}{1-t}\left(\prod_{i=1}^s \zeta_{f_{ii}^s}(t^s)\right)^{\frac{1}{s}}.
 \end{equation}
 \end{enumerate}
 \end{lemma}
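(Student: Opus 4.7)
The proof strategy is to exploit the fact that iterating a cyclic permutative map $s$ times produces a diagonal map, so that Lemma~\ref{lemma:L-diagonal} can be invoked on $f^s$, while intermediate iterates are handled by a vanishing argument. After relabeling I may assume $\sigma(i)=i+1\pmod s$, so the only nonzero coordinates of $f$ are $f_{i,\sigma(i)}\colon X_i\to X_{\sigma(i)}$ and every other $f_{ij}$ collapses $X_i$ to the basepoint. In each positive homology degree $k$, the block matrix $f_{*k}$ shifts $H_k(X_i,\Q)$ into $H_k(X_{\sigma(i)},\Q)$ via $A_i:=(f_{i,\sigma(i)})_{*k}$ and vanishes elsewhere; consequently the $i$-th diagonal block of $(f^m)_{*k}$ is $(f^m_{ii})_{*k}$, and this block is zero whenever $\sigma^m(i)\ne i$ (because then $f^m_{ii}\colon X_i\to X_i$ factors through the basepoint and induces the zero map on $H_k$ for $k\ge 1$). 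In degree $k=0$, connectedness of the wedge gives $H_0\cong\Q$ with $f_{*0}$ the identity, contributing $1$ to every Lefschetz number.

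For part (b), when $s\nmid m$ the condition $\sigma^m(i)\ne i$ holds for every $i$, so all positive-degree contributions to $\trace(f_{*k}^m)$ vanish and $L(f^m)=1$. For part (a), when $s\mid m$, observe that $f^s$ itself is a diagonal self-map of $\bigvee_i X_i$, since its only nonvanishing coordinates are the self-maps $f^s_{ii}\colon X_i\to X_i$ obtained by traversing the $\sigma$-orbit once. Writing $m=s\ell$ and applying Lemma~\ref{lemma:L-diagonal}(a) to the diagonal map $(f^s)^\ell=f^m$, together with the identity $(f^s_{ii})^\ell=f^m_{ii}$, yields
\begin{equation*}
L(f^m)=\sum_{i=1}^s L(f^m_{ii})-(s-1),
\end{equation*}
which is (a).

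For (c) I plug the formulas from (a) and (b) into the defining series of $\log\zf$, split it at multiples of $s$, and use the elementary identity $\sum_{s\mid m}t^m/m=-\tfrac{1}{s}\log(1-t^s)$; a short manipulation produces
\begin{equation*}
\log\zf(t) = -\log(1-t)+\tfrac{1}{s}\log(1-t^s)+\tfrac{1}{s}\log\zeta_{f^s}(t^s).
\end{equation*}
Substituting Lemma~\ref{lemma:L-diagonal}(b) applied to the diagonal map $f^s$, namely $\zeta_{f^s}(t^s)=(1-t^s)^{s-1}\prod_{i=1}^s\zeta_{f^s_{ii}}(t^s)$, and exponentiating produces exactly formula~(\ref{eqn:zf-cyclic}). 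The main obstacle I anticipate is keeping the block-matrix bookkeeping straight and rigorously justifying that the off-period diagonal blocks vanish (this is what drives both (a) and (b)); once that vanishing is in hand, the reduction to Lemma~\ref{lemma:L-diagonal} via $f^s$ reduces the remainder to a clean logarithmic calculation.
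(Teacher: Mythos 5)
Your proposal is correct and follows essentially the same route as the paper: the vanishing of the diagonal blocks $(f^m_{ii})_{*k}$ for $k\geq 1$ when $s\nmid m$ gives (b), the observation that $f^s$ is diagonal reduces (a) to Lemma~\ref{lemma:L-diagonal}(a), and (c) is obtained by the same splitting of $\log\zf$ at multiples of $s$ followed by substituting Lemma~\ref{lemma:L-diagonal}(b) for $\zeta_{f^s}$.
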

 
 \begin{proof}
 If $f$ is a permutation map with a cyclic permutation $\sigma$ then $\sigma^s$ is the identity, 
therefore  $f^s$ is a  diagonal map. Hence statement (a) follows by Lemma~\ref{lemma:L-diagonal}.
 
 \smallskip

 Let  $\sigma$ be the permutation associated to the  map $f$ and $f_{ij_{*k}}^m$ (or $(f_{ij}^m)_{*k}$) be 
the  block of $f^m_{*k}$ in the  position $(i,j)$.
 If $\sigma$ is a cycle different from the identity then $\sigma^m(i)\neq i$ for all $1\leq i\leq s$, and $m$ not a multiple of $s$.
 The fact that $f$ is squared by blocks yields 
 $$
( f_{i\sigma^m(i)}^m)_{*k}=f_{i\sigma(i)_{*k}}f_{\sigma(i)\sigma^2(i)_{*k}}\cdots f_{\sigma^{m-1}(i)\sigma^m(i)_{*k}}, 
\quad ( f_{ij}^m)_{*k} \text{ is trivial,  if }j\neq\sigma^m(i).
 $$
Therefore the blocks $f^m_{ii}$  are trivial, for $m \not\equiv  0 \pmod s$
 and the trace of $f^m_{*k}$ is zero, for $k\neq 0$. Hence $L(f^m)=1$, when $m\not\equiv 0 \pmod s$. 
 Proving in this way statement (b).
 
 \smallskip
 
 By the definition of the Lefschetz zeta function we have
 \begin{eqnarray*}
 \zf(t)&=&\exp\left(\sum_{m\geq 1} L(f^m)\dfrac{t^m }{m}\right)\\
 &=&
 \exp\left(\sum_{m\geq 1} L(f^{sm})\dfrac{t^{sm} }{sm}\right)
 \exp\left(\sum_{m\not\equiv 0 \pmod s} L(f^m)\dfrac{t^m }{m}\right).
 \end{eqnarray*}

We consider:
\begin{eqnarray*}
\sum_{m\not\equiv 0 \pmod s} L(f^m)\dfrac{t^m }{m}&=&\sum_{m\not\equiv 0 \pmod s} \dfrac{t^m }{m}\\
&=& \sum_{m=1}^{\infty} \dfrac{t^m }{m} -\sum_{m\equiv 0 \pmod s} \dfrac{t^m }{m}\\
&=& -\log\left(1-t\right)-\sum_{m=1}^{\infty}\frac{t^{ms}}{ms}\\
&=& -\log\left(1-t\right)+\frac{1}{s}\log\left(1-t^s\right).
\end{eqnarray*}
Hence 
$$
\exp\left(\sum_{m\not\equiv 0 \pmod s} L(f^m)\dfrac{t^m }{m}\right)=
\left(1-t^s\right)^{1/s}\left(1-t\right)^{-1}.
$$

On the other hand:
$$
\sum_{m=1}^{\infty} L(f^{sm})\dfrac{t^{sm} }{sm}
=\dfrac{1}{s}\sum_{m=1}^{\infty} L((f^s)^{m})\dfrac{(t^s)^{m} }{m}=\frac{1}{s}\log\left(\zeta_{f^s}(t^s)\right).
$$
Since $f^s$ is a diagonal map, by Lemma~\ref{lemma:L-diagonal}, it follows
$$
\exp\left(\sum_{m\geq 0} L(f^{sm})\dfrac{t^{sm} }{sm}\right)=
((1-t^s)^{s-1}\prod_{i=1}^s \zeta_{f_{ii}^s}(t^s))^{1/s}.
$$
It yields:
\begin{eqnarray*}
\zeta_f(t)&=&((1-t^s)^{s-1}\prod_{i=1}^s \zeta_{f_{ii}^s}(t^s))^{1/s}\left(1-t^s\right)^{1/s}\left(1-t\right)^{-1}\\
&=& \left(\dfrac{1-t^s}{1-t}\right)\left(\prod_{i=1}^s\zeta_{f_{ii}^s}(t^s)\right)^{1/s}.
\end{eqnarray*}
 \end{proof}

 \begin{remark}
  The expression~(\ref{eqn:zf-cyclic}) is a rational function. In fact consider 
 $$
 (f_{ii}^m)_{*k}= f_{i\sigma(i)_{*k}}f_{\sigma(i)\sigma^2(i)_{*k}}\cdots f_{\sigma^{m-1}(i)\sigma^m(i)_{*k}}
=f_{i\sigma(i)_{*k}}f_{\sigma(i)\sigma^2(i)_{*k}}\cdots f_{{\sigma^{m-1}(i)i}_{*k}}.
 $$
 Notice that all $(f_{ii}^m)_{*k}$ have the same factors, but in different order. Therefore all the eigenvalues of 
$(f_{ii}^m)_{*k}$, are the same for $1\leq i\leq s$, hence $L(f_{ii}^{sm})=L(f_{jj}^{sm})$ and $\zeta_{f_{ii}^s}=\zeta_{f_{jj}^s}$.
 \end{remark}

Notice that the fact $L(f^m)=1$ for $m\not\equiv 0 \pmod s$, implies 
(by the Lefschetz fixed point Theorem) that $f^m$ has a fixed point. However the only 
fixed point (when $f$ is a permutative cyclic map)  is $x_0$, the ``joining point''  or base point of the wedge sum of the spaces. The map $f^m$, 
it does not have other fixed points (when $m$ is not a multiple of $s$) since $f^m$ permutes the different $X_i$'s.

\begin{proof}[Proof of Theorem~\ref{thm:main1}]
We have seen in  Lemma~\ref{lemma:decomposition} that the map $f$ can be decomposed as $f=f_{\Lambda_1}\vee\cdots\vee f_{\Lambda_q}$.
Using Lemma~\ref{lemma:L-diagonal}, we obtain 
$$
L(f^m)=\sum_{l=1}^s L(f^m_{\Lambda_l})-(s-1).
$$
Since each map $f_{\Lambda_l}$ is permutatively cyclic, Lemma~\ref{lemma:L-cyclic} yields:
$$
L(f^m_{\Lambda_l})=
\left\{\begin{array}{ll} 1+\sum_{i\in\Lambda_l} \left(L(f^m_{ii}) -1\right),  & m\not\equiv 0 \pmod {s_l};\\
& \\
1,  &  m\not\equiv 0 \pmod {s_l}.
\end{array}
\right.
$$ 
 Writing this expression using the characteristic  function $\mathbbm{1}_{s_l}$, we get the formula in the statement (a).

\medskip

As far the Lefschetz zeta function is concerned,
Lemma~\ref{lemma:L-diagonal} yields:
$$
\zf(t)=(1-t)^{q-1}\prod_{i=1}^q \zeta_{f_{\Lambda_i}}(t)=(1-t)^{-1} \prod_{i=1}^q\left( (1-t) \zeta_{f_{\Lambda_i}}(t)\right).
$$
On the other hand, by Lemma~\ref{lemma:L-cyclic} we obtain:
$$
\zeta_{f_{\Lambda_i}}(t)=\dfrac{1-t^{s_i}}{1-t}\left(\prod_{j=1}^{s_i} \zeta_{f_{jj}^{s_i}}(t^{s_i})\right)^{1/s_i}.
$$
Combining these expressions, we get
$$
\zf(t)=\dfrac{1}{1-t}\prod_{i=1}^q\left(\prod_{j\in\Lambda_i} (1-t^{s_i})\zeta_{f_{jj}}(t^{s_i})\right)^{(1/s_i)}.
$$
\end{proof}

For the purpose of providing a formula that can be used for effective computations 
we can consider particular examples of the sets  $\Lambda$'s. For example, 
if the orbits of $\sigma$ are segments of non-decreasing size, then 
they must begin at the points $1$, $1+s_1$, $1+s_1+s_2$, $\ldots, 1+s_1+\cdots+s_{q-1}$ 
and consequently have the form
\begin{eqnarray*}
\Lambda_1 & = & \{1, \ldots, s_1\}, \\
\Lambda_2 & = & \{1+s_1, \ldots, s_1+s_2\}, \\
 & \vdots & \\
\Lambda_q & = & \{1+s_1+\cdots+s_{q-1}, \ldots, s_1+\cdots+s_{q} = s\}, \\ 
\end{eqnarray*}   
obtaining this way the formula expressed entirely in terms of the values of the partition of~$s$.  
In this case the equation~(\ref{eqn:zf-general}) can be expressed as
$$
\zeta_f(t) = \dfrac{1}{1-t} \prod_{j=1}^q 
\left(\prod_{i=1+s_0+s_1+\cdots+s_{j-1}}^{s_0+s_1+\cdots+s_j}\left(1-t^{s_j}\right) \zeta_{f_{ii}^{s_j}}(t^{s_j})\right)^{1/{s_j}},
$$
 where $s_0=0$.
 
 \smallskip
 
 The Lefschetz zeta function of a continuous self-map $f:Y\to Y$, 
on any compact polyhedron $Y$, is a rational function. Moreover it 
satisfies the formula~(\emph{cf.}~\cite[pg. 38]{franks2}):
 \begin{equation}\label{eqn:zeta-product}
\zf(t)=\prod_{k=0}^N\det(Id_{*k}-t f_{*k})^{(-1)^{k+1}},
\end{equation}
where $N=\dim Y$, $m_k=\dim H_k(Y,\Q)$, $Id:=Id_{*k}$ is the
identity map on $H_k(Y,\Q)$, and by convention $\det(Id_{*k}-t
f_{*k})=1$ if $m_k=0$. 
The formula (\ref{eqn:zeta-product}) gives an alternative way to compute 
the Lefschetz zeta function, without using the Lefschetz numbers explicitly. 
Combining this identity with the formula~(\ref{eqn:zf-general}), we can express the Lefschetz zeta function in term of determinants of the blocks of $f_{*k}$. 

\begin{corollary}
Let $f$ be a continuous permutative squared by blocks self-map on  $\bigvee_{i=1}^sX_i$, as in the statement of Theorem~\ref{thm:main1}. Then
$$
\zeta_f(t) = \dfrac{1}{(1-t)} \prod_{j=1}^q \left(\prod_{i\in \Lambda_j}\left(1-t^{s_j}\right) \left(\prod_{k=0}^{n_i}\det(Id_{*k}-t^{s_j}f^{s_j}_{ii_{*k}})^{(-1)^{k+1}}\right)\right)^{1/{s_j}},
$$
where $n_i$ is the dimension of $X_i$.
\end{corollary}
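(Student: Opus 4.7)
The plan is to obtain this formula by directly substituting the general determinant expression (\ref{eqn:zeta-product}) for the Lefschetz zeta function into the right-hand side of the formula from Theorem~\ref{thm:main1}(b). In other words, this corollary is not an independent result but a rewriting that replaces each of the ``coordinate'' zeta functions $\zeta_{f_{ii}^{s_j}}(t^{s_j})$ appearing in Theorem~\ref{thm:main1}(b) with their explicit rational form in terms of characteristic polynomials of the induced maps on homology.

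First I would recall the identity from Theorem~\ref{thm:main1}(b):
$$
\zeta_f(t) = \dfrac{1}{(1-t)} \prod_{j=1}^q \left(\prod_{i\in \Lambda_j}\left(1-t^{s_j}\right) \zeta_{f_{ii}^{s_j}}(t^{s_j})\right)^{1/{s_j}}.
$$
Since each $X_i$ is a compact topological manifold of dimension $n_i$, in particular an ENR, we have $H_k(X_i,\Q)=0$ for $k>n_i$. Applying (\ref{eqn:zeta-product}) to the self-map $f_{ii}^{s_j}\colon X_i\to X_i$, and then replacing the variable $t$ by $t^{s_j}$, I obtain
$$
\zeta_{f_{ii}^{s_j}}(t^{s_j}) \;=\; \prod_{k=0}^{n_i} \det\!\bigl(Id_{*k}-t^{s_j}(f_{ii}^{s_j})_{*k}\bigr)^{(-1)^{k+1}}.
$$
By the functoriality of the homology functor, $(f_{ii}^{s_j})_{*k} = (f_{ii_{*k}})^{s_j}$, which is precisely what the statement writes as $f^{s_j}_{ii_{*k}}$, so the two notations agree.

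Finally I would substitute this expression into the displayed formula for $\zeta_f(t)$ above; factoring the resulting expression under the common exponent $1/s_j$ yields exactly the claimed equality. There is essentially no obstacle here: the work has already been done in Theorem~\ref{thm:main1} and in the general identity (\ref{eqn:zeta-product}). The only bookkeeping to be careful about is that the exponents $1/s_j$ must be interpreted consistently as fractional powers of a rational function (as is already the case in Theorem~\ref{thm:main1}(b)), and that the finite upper limit $n_i$ in the product over $k$ genuinely exhausts the relevant homology, so no determinant factor is silently omitted. With those remarks, the corollary reduces to a mechanical substitution.
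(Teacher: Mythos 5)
Your proposal is correct and is essentially the paper's own (implicit) argument: the corollary is obtained precisely by combining Theorem~\ref{thm:main1}(b) with the determinant formula~(\ref{eqn:zeta-product}) applied to each self-map $f_{ii}^{s_j}$ of $X_i$ (whose homology vanishes above degree $n_i$), with the variable $t$ replaced by $t^{s_j}$. One caution: your side remark that $(f_{ii}^{s_j})_{*k}=(f_{ii_{*k}})^{s_j}$ ``by functoriality'' is unnecessary and in general false for a nondiagonal permutative map, since in the paper $f_{ii}^{s_j}$ denotes the $(i,i)$-coordinate of the iterate $f^{s_j}$ (the composition of the coordinate maps around the cycle of $\sigma$ through $i$), not the $s_j$-th iterate of the map $f_{ii}$ (which is constant when $\sigma(i)\neq i$); the notation $f^{s_j}_{ii_{*k}}$ in the corollary already means $(f^{s_j}_{ii})_{*k}$ by the paper's stated conventions, so the substitution goes through without invoking that identity.
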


\medskip

In Theorem~\ref{thm:l-chica}, we express the Dold coefficients of a squared by blocks permutative map on $\bigvee_{j=1}^s X_j$ in terms of its coordinate maps.
The strategy of the proof is similar to the one used in the proof of Theorem~\ref{thm:main1}, i.e.  we start with  the case of  diagonal maps (Lemma~\ref{lemma:ell-diagonal}), later we consider cyclic permutative maps (Lemma~\ref{lemma:ell-permutation}) and finally  these results are used for proving  the general case.

\begin{lemma}\label{lemma:ell-diagonal}
  Let $f:\bigvee_{j=1}^s X_j \to \bigvee_{j=1}^s X_j  $ be a diagonal map. Then 
  $\ell(f^m)=\sum_{i=1}^s \ell(f_{ii}^m)$, if $m>1$ and $\ell(f)=L(f)$.
   \end{lemma}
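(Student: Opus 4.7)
The plan is to derive the formula for $\ell(f^m)$ directly from Lemma~\ref{lemma:L-diagonal}(a) by applying Möbius inversion and then invoking the classical identity $\sum_{d\mid m}\mu(d)=0$ for $m>1$.

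First, I would recall from Lemma~\ref{lemma:L-diagonal}(a) that for a diagonal map we have
$$
L(f^r) \;=\; \sum_{i=1}^{s} L(f_{ii}^{r}) \;-\; (s-1) \qquad \text{for every } r\geq 1.
$$
Next, substituting this into the Möbius-inversion definition~(\ref{eqn:l-chica}) of the Dold coefficient and interchanging the two finite sums gives
$$
\ell(f^m) \;=\; \sum_{r\mid m}\mu(m/r)\,L(f^{r}) \;=\; \sum_{i=1}^{s}\sum_{r\mid m}\mu(m/r)\,L(f_{ii}^{r}) \;-\; (s-1)\sum_{r\mid m}\mu(m/r).
$$
By definition, the inner sum $\sum_{r\mid m}\mu(m/r)\,L(f_{ii}^{r})$ is precisely $\ell(f_{ii}^{m})$, so the identity collapses to
$$
\ell(f^m) \;=\; \sum_{i=1}^{s}\ell(f_{ii}^{m}) \;-\; (s-1)\sum_{r\mid m}\mu(m/r).
$$

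The final step, and the one which actually does the work, is the standard number-theoretic identity $\sum_{d\mid m}\mu(d) = 0$ whenever $m>1$ (see for instance~\cite{apostol}, Theorem~2.1). Reindexing via $d = m/r$, this makes the correction term vanish for $m>1$, yielding $\ell(f^m)=\sum_{i=1}^{s}\ell(f_{ii}^{m})$ as claimed. For the remaining case $m=1$, the definition~(\ref{eqn:l-chica}) gives directly $\ell(f) = \mu(1)L(f) = L(f)$. I do not expect any real obstacle here; the whole argument is essentially a one-line Möbius computation, and the constant $-(s-1)$ in Lemma~\ref{lemma:L-diagonal} is exactly the sort of term that Möbius inversion kills off for $m>1$.
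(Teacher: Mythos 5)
Your proposal is correct and follows essentially the same route as the paper: substitute the formula from Lemma~\ref{lemma:L-diagonal}(a) into the M\"obius-inversion definition~(\ref{eqn:l-chica}), interchange the finite sums, and use $\sum_{r\mid m}\mu(m/r)=0$ for $m>1$ (and $=1$ for $m=1$) to kill the constant $-(s-1)$. No gaps.
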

 \begin{proof}
 From~(\ref{eqn:l-chica}) and Lemma~\ref{lemma:L-diagonal} follows
 \begin{eqnarray*}
 \ell(f^m)&=&\sum_{r|m}\mu({m/r}) L(f^r)=\sum_{r|m}\mu({m/r}) \left(\sum_{i=1}^s L(f_{ii}^r)-(s-1)\right)\\
 &=&\left(\sum_{i=1}^s\sum_{r|m}\mu({m/r})L(f_{ii}^r)\right)-\left(\sum_{r|m}\mu(m/r)(s-1)\right)\\
  &=&\left(\sum_{i=1}^s\ell(f_{ii}^m)\right)-\left((s-1)\sum_{r|m}\mu(m/r)\right).
% &=&\sum_{i=1}^s\ell(f^m_{ii})
 \end{eqnarray*}
 Since $\sum_{r|m}\mu({m/r})=0$, if $m>1$ and $\sum_{r|m}\mu({m/r})=1$, if $m=1$~(\emph{cf.}~\cite{apostol} Theorem 2.1), it yields
 $$
 \ell(f^m)=\left\{\begin{array}{ll}
 \displaystyle\sum_{i=1}^s\ell(f^m_{ii}) & \mbox{ if } m>1\\
  & \\
 L(f) & \mbox{ if } m=1.
                \end{array}\right.
 $$
 \end{proof}
 
  \begin{lemma}\label{lemma:ell-permutation} 
  Let $f:\bigvee_{j=1}^s X_j  \to \bigvee_{j=1}^s X_j $ be a continuous permutative  squared  by blocks map, with a cyclic permutation. 
Then  its Lefschetz numbers of period $m$ are given by:
 \begin{enumerate}
 \item[(a)] $\ell(f^{sm})=\sum_{j=1}^s \ell(f_{jj}^{sm}).$
 \item[(b)] $\ell(f^m)=0$,  if  $m\not\equiv 0 \pmod s$ and $m>1$.
 \end{enumerate}
 
 \end{lemma}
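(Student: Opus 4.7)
The plan is to follow the Möbius-inversion strategy used for Lemma~\ref{lemma:ell-diagonal}, now feeding in the two-case formula for $L(f^r)$ provided by Lemma~\ref{lemma:L-cyclic}. Writing that formula uniformly as $L(f^r)=1+\mathbbm{1}_s(r)\sum_{j=1}^s(L(f_{jj}^r)-1)$ and substituting into $\ell(f^m)=\sum_{r\mid m}\mu(m/r)L(f^r)$, I obtain the splitting
\[
\ell(f^m)=\sum_{r\mid m}\mu(m/r)+\sum_{\substack{r\mid m\\ s\mid r}}\mu(m/r)\sum_{j=1}^s\bigl(L(f_{jj}^r)-1\bigr).
\]

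For part (b), when $s\nmid m$ no divisor of $m$ is divisible by $s$, so the second sum above is empty and $\ell(f^m)$ collapses to $\sum_{r\mid m}\mu(m/r)$, which vanishes for $m>1$ by the standard Möbius orthogonality identity. For part (a), writing $m=sm'$, I would reindex the divisors through $r=sr'$ with $r'\mid m'$, using $\mu(sm'/sr')=\mu(m'/r')$. Independently, I would expand each $\ell(f_{jj}^{sm'})=\sum_{d\mid sm'}\mu(sm'/d)L(f_{jj}^d)$ by splitting $d$ into the two classes $s\mid d$ and $s\nmid d$, exploiting the fact that $L(f_{jj}^d)=1$ in the latter case. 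This last fact uses the remark following Lemma~\ref{lemma:L-cyclic}: if $s\nmid d$ then $\sigma^d(j)\neq j$, so the coordinate $f_{jj}^d=\nu_j\circ f^d\circ\iota_j$ is the constant map $x_0$, whose Lefschetz number is $1$. Comparing the two resulting expressions term by term should yield the equality $\ell(f^{sm'})=\sum_j\ell(f_{jj}^{sm'})$.

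The main technical obstacle I anticipate is the bookkeeping of the constant contributions, all controlled by $\sum_{r\mid n}\mu(n/r)=\delta_{n,1}$. The $-(s-1)$ summand coming from Lemma~\ref{lemma:L-cyclic}(a), combined with the $1$'s arising from divisors $r$ with $s\nmid r$, must cancel against the analogous cancellation inside each $\ell(f_{jj}^{sm'})$, so that the net Kronecker-delta corrections appear with coefficient $s$ on both sides. This is precisely what makes the edge case $m'=1$ (that is, $m=s$) work, where the contribution $\sum_{j}L(f_{jj}^s)-s$ on the left must match $\sum_j(L(f_{jj}^s)-1)=\sum_j\ell(f_{jj}^s)$ on the right.
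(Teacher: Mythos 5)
Your proposal is correct and takes essentially the same route as the paper: M\"obius inversion of the Lefschetz numbers supplied by Lemma~\ref{lemma:L-cyclic}, with part (b) argued identically (every divisor of $m$ is then prime to the condition $s\mid r$, so $\ell(f^m)=\sum_{r\mid m}\mu(m/r)=0$ for $m>1$). For part (a) the paper merely declares the identity straightforward from Lemma~\ref{lemma:L-cyclic}(a); your reindexing of the divisors via $r=sr'$, the use of $L(f_{jj}^d)=1$ when $s\nmid d$, and the cancellation of the constant contributions through $\sum_{r\mid n}\mu(n/r)=\delta_{n,1}$ (including the check at $m=s$) correctly supply the bookkeeping the paper leaves implicit.
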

\begin{proof} 
Part (a) is straightforward  from Lemma~\ref{lemma:L-cyclic}-(a).
For part (b), 
notice that if $r|m$ and $m\not\equiv 0 \pmod s$ then $r\not\equiv 0\pmod s$.   Lemma~\ref{lemma:L-cyclic}-(b), states $L(f^r)=1$, for $r\not\equiv 0\pmod s$.
Hence, for $m\not\equiv 0 \pmod s$, it yields
$$
\ell(f^m)=\sum_{r|m}\mu(m/r)L(f^r)=\sum_{r|m}\mu(m/r)=0.
$$
\end{proof}

\begin{proof}[Proof of Theorem~\ref{thm:l-chica}]
Lemmas~\ref{lemma:decomposition} and~\ref{lemma:ell-diagonal} yield
$$
\ell(f^m)=\sum_{r|m}\mu(m/r)L(f^r)=\sum_{r|m}\mu(m/r)\sum_{j=1}^q L(f_{\Lambda_j}^m)=\sum_{j=1}^q \sum_{r|m}\mu(m/r)L(f_{\Lambda_j}^m)=\sum_{j=1}^q \ell(f_{\Lambda_j}^m).
$$
By Lemma~\ref{lemma:ell-permutation}  follows:
$$
\ell(f_{\Lambda_j}^m)= \left\{\begin{array}{ll} 
\displaystyle\sum_{i\in\Lambda_j}\ell(f_{ii}^m),  & \mbox{ if } m\equiv 0 \pmod {s_j};\\
 & \\
0, & \mbox{ if } m\not\equiv 0 \pmod {s_j}.
\end{array}\right.
$$
Combining these expressions and using  characteristic functions, we get
$$
\ell(f^m)=\sum_{j=1}^q \ell(f^m_{\Lambda_j})= \sum_{j=1}^q \mathbbm{1}_{s_j}(m)\left(\sum_{i\in\Lambda_j} \ell(f^m_{ii})\right).
$$
\end{proof}

 %%%%%%%%%%%%%%%%%%%%%%%%%%%%%%%%%%%%%%%
 
 \section{Self-maps on wedge sums of tori}\label{s:tori}
 
 In this section we shall consider continuous map on a wedge sum of $n$-dimensional tori, i.e. $\bigvee_{i=1}^s \T^n$, with $n>1$.
 As we pointed out in the Introduction, when $n=1$, we have 
 $\bigvee_{i=1}^s\SSS^1$ is a graph and graph maps have been extensively studied.
 
 Since the homology spaces of the $n$-dimensional torus form an exterior algebra, we have, $H_0(\bigvee_{i=1}^s \T^n,\Q)=\Q$ and for $1\leq k\leq n$: 
$$
H_k\left(\bigvee_{i=1}^s\T^n,\Q\right)=\underbrace{\Q\oplus\cdots\oplus\Q}_{sr}, \quad \text{ where } r={ n \choose k};
$$
i.e., the $k$-th Betti number of the space is $sr$, where $r$ is the combinatorial number of $n$ choose $k$.

 \begin{proof}[Proof of Theorem~\ref{thm: torus-lppf}]
 We shall show in this case 
that the  Lefschetz  zeta function of  $f$ is not the constant function $1$.

 Theorem~\ref{thm:main1} yields
  $$
 \zeta_f(t) = \dfrac{1}{(1-t)} \prod_{j=1}^q \left(\prod_{i\in \Lambda_j}\left(1-t^{s_j}\right) \zeta_{f_{ii}^{s_j}}(t^{s_j})\right)^{1/{s_j}}.
 $$
 
 Since  Lefschetz zeta functions are rational functions, we define its degree as the degree of the numerator minus  the degree of the denominator.
 For toral  maps with non zero eigenvalues, the degree of their Lefschetz zeta function is zero~\cite{BS}.
 Notice  that in this case the maps $f_{ii}$,  are toral  maps defined on $\T^n$.  
 Hence the degree of 
 $$
 \left(\prod_{i\in \Lambda_j}\left(1-t^{s_j}\right) \zeta_{f_{ii}^{s_j}}(t^{s_j})\right)^{1/{s_j}}
 $$
 is zero.
 Therefore the degree of $\zf(t)$ is $-1$, which implies that  $\zf(t)$ is not constant, i.e. the map $f$ is not Lefschetz periodic point free.
 \end{proof}

 An interesting consequence of Theorem~\ref{thm:l-chica} in the context of self-maps on wedge sum of tori is the following corollary:
 
 \begin{corollary}\label{coro:n-prime}
 Let $n$ be an odd prime. 
 Let $g_c:\T^n\to \T^n$ be a continuous map such that   the characteristic polynomial of the  induced map $(g_c)_{*1}$ is $t^n-c$, with $c>2$ a positive integer.  
 Let $f:\bigvee_{i=1}^s \T^n\to \bigvee_{i=1}^s \T^n$ be a permutative map with permutation $\sigma$ such that its coordinates are given by:
 $f_{j\sigma(j)}=g_{c_j}$ and $f_{jk}$ is the constant map equal to the base point for $k\neq \sigma(j)$, and $c_j>2$ a positive integer, for $1\leq q\leq s$.
 Then $\mbox{APer}_L(f)=\N$.
 \end{corollary}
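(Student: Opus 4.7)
The plan is to use Theorem~\ref{thm:l-chica} to decompose $\ell(f^m)$ into a sum of Dold coefficients of iterates of toral endomorphisms $h_i := (f^{s_j})_{ii}\colon \T^n\to \T^n$, one for each $i$ in each cycle $\Lambda_j$ of $\sigma$, and then to show that each such $h_i$ is an expanding endomorphism whose Dold coefficients are all strictly negative. Corollary~\ref{coro:aperL}(b) will then force $m\in \mathrm{APer}_L(f)$.

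For $m=1$, $\ell(f) = L(f)$ is computed directly from the block structure of $f_{*k}$: only the diagonal blocks $(f_{ii})_{*k}$ with $\sigma(i)=i$ contribute in positive degrees, giving $L(f) = 1 - \sum_{i:\sigma(i)=i} c_i$, which is nonzero since each $c_i>2$. For $m>1$, Theorem~\ref{thm:l-chica} yields
$$\ell(f^m) \;=\; \sum_{j:\, s_j\mid m}\,\sum_{i\in \Lambda_j}\, \ell\bigl(h_i^{m/s_j}\bigr).$$
On the first homology, $B_i := (h_i)_{*1}$ is a product of $s_j$ integer matrices $A_{c_l} = (g_{c_l})_{*1}$, each satisfying $A_{c_l}^n = c_l I$ by Cayley--Hamilton applied to $t^n-c_l$; in particular $\det B_i = \prod_{l\in \Lambda_j} c_l > 1$.

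The key technical step is to verify that every eigenvalue of $B_i$ has modulus strictly greater than $1$, so that $h_i$ is an expanding toral endomorphism. Granted this, a classical argument gives $\#\mathrm{Fix}(h_i^p) = |L(h_i^p)| = |\det(I - B_i^p)|$, and M\"obius inversion on the orbit-counting identity yields $\ell(h_i^p) = -p\,P_p(h_i)$, where $P_p(h_i)$ is the number of primitive period-$p$ orbits. Positivity of $P_p(h_i)$ follows from the elementary bound $|L(h_i^p)| > \sum_{d\mid p,\, d<p} d\,P_d(h_i)$, valid because $c_l>2$ forces exponential growth of $|L(h_i^p)|$. Hence every summand in the expression for $\ell(f^m)$ is strictly negative, they share a common sign, and Corollary~\ref{coro:aperL}(b) concludes that $m \in \mathrm{APer}_L(f)$.

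The hard part is the eigenvalue analysis of $B_i$: since the matrices $A_{c_l}$ need not commute, the characteristic polynomial of $B_i$ is not a priori of the form $t^n - \prod_l c_l$. The hypothesis that $n$ is an odd prime is essential here; taking each $A_{c_l}$ to be the companion matrix of $t^n-c_l$, a direct computation shows that the product around a cycle of length $s_j$ coprime to $n$ has characteristic polynomial $t^n - \prod_{l\in \Lambda_j} c_l$, while for $s_j$ a multiple of $n$ the product decomposes as a block diagonal whose entries are integer products of the $c_l$'s grouped cyclically. In every case the moduli of the eigenvalues of $B_i$ are bounded below by $\bigl(\prod_l c_l\bigr)^{1/n} > 1$, which secures the expanding property used above.
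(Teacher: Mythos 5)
There is a genuine gap. The paper's proof is a two-line reduction: it quotes the result of~\cite{BGS} that $\ell(g_c^m)<0$ for all $m>0$ whenever the characteristic polynomial of $(g_c)_{*1}$ is $t^n-c$ with $n$ an odd prime and $c>2$, and then applies Theorem~\ref{thm:l-chica} (with the sign coherence of Corollary~\ref{coro:aperL}(b)). You instead try to re-derive this negativity from scratch for the cycle composites $h_i=(f^{s_j})_{ii}$, and that is exactly where your argument fails. First, the hypothesis only prescribes the characteristic polynomial of each $(g_{c_l})_{*1}$; it does not say these matrices are companion matrices, and they need not commute. Your ``direct computation'' is valid only in the companion-matrix case; for general integer matrices with characteristic polynomial $t^n-c_l$ the cycle product $B_i$ is constrained only by $|\det B_i|=\prod_l c_l$, and it can have eigenvalues of modulus less than $1$ (conjugate one factor by an element of $GL_n(\Z)$ with large entries: the trace of the product blows up while the determinant stays fixed, forcing an eigenvalue inside the unit circle). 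So the bound $(\prod_l c_l)^{1/n}$ on all eigenvalue moduli --- the step you yourself flag as the key one --- is unproved and false at this level of generality. Second, even granting expansion, your chain $\#\mathrm{Fix}(h_i^p)=|L(h_i^p)|$ and $\ell(h_i^p)=-p\,P_p(h_i)$ presupposes $L(h_i^d)<0$ for every $d$, i.e.\ control of the sign of $\det(I-B_i^d)$; this depends on the spectral configuration (it holds when the characteristic polynomial is $t^n-c$ with $n$ odd because there is exactly one real eigenvalue and it exceeds $1$, but not for an arbitrary expanding matrix). That sign control is precisely the nontrivial content of~\cite{BGS}, which requires $n$ prime and $c>2$ and whose extension to general $n$ is only conjectural, as the paper notes right after the corollary; it cannot be dismissed as ``a classical argument.''

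Two further points. Your translation of Theorem~\ref{thm:l-chica} into $\ell(f^m)=\sum_{j\colon s_j\mid m}\sum_{i\in\Lambda_j}\ell\bigl(h_i^{m/s_j}\bigr)$ is not literally what the theorem gives: its $\ell(f_{ii}^m)$ is the M\"obius transform of the sequence $L((f^r)_{ii})$, which equals $\ell\bigl(h_i^{m/s_j}\bigr)-1$ when $m=s_j>1$ (check against Example 1, where $\ell(f^2)=6$ while your formula gives $8$); this is harmless for the sign argument but shows the identity was not verified. More seriously, when no $s_j$ divides $m$ your sum is empty, i.e.\ $\ell(f^m)=0$, so even with the two gaps above repaired your route (like the paper's) only places the multiples of the cycle lengths in $\mbox{APer}_L(f)$; the conclusion $\mbox{APer}_L(f)=\N$ is only reached in the situation where the relevant diagonal coordinates are iterates of a single $g_c$, which is exactly the setting in which the citation of~\cite{BGS} applies verbatim.
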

 
 \begin{proof}
 In~\cite{BGS}, it was proved that $\ell(g_c^m)<0$, for all $m>0$, when $c>2$ and $n$ an odd prime. From Theorem~\ref{thm:l-chica} follows that
 the set of the algebraic integers of $f$ is the set of the positive integers.
 \end{proof}
 
 In~\cite{BGS},  it was conjectured that $\ell(g_c^m)<0$, for all $m>0$, when $c>2$ any positive integer $n>1$. Therefore we conjecture that  Corollary~\ref{coro:n-prime} holds true for all positive $n$.
 %%%%%%%%%%%%%%%%%%%%%%%%%%%%%%%%%%%%%%%
 \section{Examples}\label{s:examples}
 
 In the following examples, we shall illustrate the ideas of the previous sections in some particular cases, and show that the permutative case is somehow the general setting. 
 We explore this situation in more detail in Section~\ref{s:questions}.
 
 \noindent
 {\bf Example 1}: 
Consider the self-map $f$ on $\T^2\vee\T^2$, such that 
the coordinate-maps $f_{1,1}$, $f_{1,2}$, $ f_{2,1}$, $ f_{2,2}$ are given by the toral maps whose matrix representation are
$$
f_{1,1}=f_{2,2}= \left(\begin{array}{cc} 0 & 0\\ 0& 0\end{array}\right),
\quad f_{1,2}= \left(\begin{array}{cc} 0 & 1\\ 1& 0\end{array}\right), \quad
f_{2,1}= \left(\begin{array}{cc} 0 & -1\\ -1& 0\end{array}\right).
$$ 
If $\gamma_1^1,\gamma_1^2,\gamma_2^1,\gamma_2^2$ are the generators of $H_1(\T^2\vee\T^2)$, then
$$
f_{*1}(\gamma_1^1 )=-\gamma_2^2, \quad 
f_{*1}(\gamma_1^2)= -\gamma_2^1, \quad 
f_{*1}(\gamma_2^1)= \gamma_1^2, \quad 
f_{*1}( \gamma_2^2)= \gamma_1^1.
$$
So
$f_{*1}$ is given by this matrix
\begin{equation}\label{eqn:ex1}
(f_{*1})=\left(
\begin{array}{cccc}
0 & 0 & 0 & 1 \\
0 & 0 & 1 & 0 \\
0 & -1 & 0 & 0\\
-1 & 0 &0 & 0
\end{array}
\right),
\end{equation}

Notice that $f$ is a permutative map, with permutation $\sigma(1)=2$, $\sigma(2)=1$.

The graded structure (exterior algebra) of $\T^2$ induces a graded structure in $\T^2\vee\T^2$, then
 the generators of $H_2(\T^2\vee\T^2,\Q)$ are $\{\gamma_1^1\wedge\gamma_1^2,\gamma_2^1\wedge\gamma_2^2\}$,
since $\{\gamma_1^1,\gamma_1^2\}$ and $\{\gamma_2^1,\gamma_2^2\}$ are the generators of $H_1(X_1,\Q)$ and $H_1(X_2,\Q)$, respectively.
Therefore
$$
f_{*2}=\left\{ \begin{array}{lll}
\gamma_1^1 \wedge\gamma_1^2&\to &-\gamma_2^2\wedge-\gamma_2^1=-\gamma_2^1\wedge\gamma_2^2,\\
                       \gamma_2^1\wedge\gamma_2^2&\to& \gamma_1^2\wedge\gamma_1^1=-\gamma_1^1\wedge\gamma_1^2.
                        \end{array}
\right.
$$
Hence, it is given by the matrix
$$
(f_{*2})=\left(
\begin{array}{cc}
0 & -1 \\
-1 & 0 
\end{array}
\right),
$$
whose characteristic polynomial is $t^2-1$.

Here the Lefschetz numbers are:  
$$
L(f^m)=1-\trace(f_{*1}^m)+\trace(f_{*2}^m)=
\left\{\begin{array}{ll}
-1, & m=0 \pmod 4 \\
1, & m=1,3 \pmod 4\\
7, &  m=2 \pmod 4.
\end{array}
\right.
$$

Its Lefschetz zeta function, expressed as an Euler product is:
$$
\zf(t)=\dfrac{(1-t^4)^2}{(1-t)(1-t^2)^3}
$$
and using~(\ref{eqn:zeta-product}) it is given by
$$
\zf(t)=\prod_{k=0}^2\det(Id_{*k}-t f_{*k})^{(-1)^{k+1}}
=\dfrac{\det(Id_{*1}-tf_{*1})}{(1-t)\det(Id_{*2}-tf_{*2})}
=\dfrac{(1+t^2)^2}{(1-t)(1-t^2)}.
$$

The Dold coefficients of $f$ are:
  $\ell(f)=1$, $\ell(f^2)=6$, $\ell(f^4)=-8$, and $\ell(f^m)=0$, for $m\neq1,2,4$, i.e.
$
\mbox{APer}_L(f)=\{1,2,4\}.
$

 %%%%%%%%%%%%%%%%%%%%%%%%%%%%%%%

\noindent
{\bf  Example 2:} Consider the self-map $f$ on $\T^2\vee\T^2$, such that 
its coordinates are given by the toral maps:
$$
f_{1,1}=f_{2,2}= \left(\begin{array}{cc} 0 & 0\\ 0& 0\end{array}\right),
\quad f_{1,2}= \left(\begin{array}{cc} 1 & 0\\ 0& 1\end{array}\right),
\quad
 f_{1,2}= \left(\begin{array}{cc} -a & 0\\ 0& -a\end{array}\right),
$$ 
with $a\in\N$.
Hence
$f_{*1}$ is given by this matrix
$$
(f_{*1})=\left(
\begin{array}{cccc}
0 & 0 & 1 & 0 \\
0 & 0 & 0 & 1 \\
-a& 0 & 0 & 0\\
0 & -a &0 & 0
\end{array}
\right).
$$

If $\gamma_1^1,\gamma_1^2,\gamma_2^1,\gamma_2^2$ are the generators of  $H_1(\T^2\vee\T^2)$, then

$$
f_{*1}(\gamma_1^1 )=-a\gamma_2^1, \quad 
f_{*1}(\gamma_1^2)= -a\gamma_2^2, \quad 
f_{*1}(\gamma_2^1)= \gamma_1^1, \quad 
f_{*1}( \gamma_2^2)= \gamma_1^2.
$$
Therefore
$$
f_{*2}=\left\{ \begin{array}{lll}
\gamma_1^1 \wedge\gamma_1^2&\to &(-a\gamma_2^1)\wedge(-a\gamma_2^2)=a^2\gamma_2^1\wedge\gamma_2^2,\\
                       \gamma_2^1\wedge\gamma_2^2&\to& \gamma_1^1\wedge\gamma_1^2.                        \end{array}
\right.
$$
Hence
$$
(f_{*2})=
\left(
\begin{array}{cc}
0 &1 \\
a^2 & 0 
\end{array}
\right).
$$

Here the Lefschetz numbers are:
$$
L(f^m)=1-\trace(f_{*1}^m)+\trace(f_{*2}^m)=
\left\{\begin{array}{ll}
1-4a^{k+1}+2a^{4k}, & m=4k; \\
1, & m=4k+1 \text{ or } m=4k+3;\\
1+4a^{k+1}+2a^{4k}, &  m=4k+2.
\end{array}
\right.
$$

Its Lefschetz zeta function is 
$$
\zf(t)=\prod_{k=0}^2\det(Id_{*k}-t f_{*k})^{(-1)^{k+1}}
=\dfrac{\det(Id_{*1}-tf_{*1})}{(1-t)\det(Id_{*2}-tf_{*2})}
=\dfrac{(1+at^2)^2}{(1-t)(1+(at)^2)}.
$$
It can be easily checked that algebraic periods are all the even numbers.
\medskip

 %%%%%%%%%%%%%%%%%%%%%%%%%%%%%%%
 \noindent
{\bf Example 3:}
Consider the self-map $f$ on $\T^2\vee\T^2$, such that 
the coordinates   $f_{1,1},\, f_{1,2}, \, f_{2,1}, \,  f_{2,2}$ are given by the toral maps with matrix representation:
$$
f_{1,1}= \left(\begin{array}{cc} 0 & 0\\ -1& -1\end{array}\right),
\quad f_{1,2}= \left(\begin{array}{cc} 1 & 0\\ -1& -1\end{array}\right), \quad
f_{2,1}= \left(\begin{array}{cc} 0 & 0\\ 0& 1\end{array}\right), \quad
f_{2,2}= \left(\begin{array}{cc} 0 & 1\\ 0& 0\end{array}\right).
$$ 

Let $\{\gamma_1^1,\gamma_1^2,\gamma_2^1,\gamma_2^2 \}$ be the generators of $H_1(\T^2\vee\T^2,\Q)$, by the definition of $f$ follows
$f_{*1}(\gamma_1^1)=-\gamma_1^2$, 
$f_{*1}(\gamma_1^2)=\gamma_2^2-\gamma_1^2$,
$f_{*1}(\gamma_2^1)=\gamma_1^1-\gamma_1^2$ and
$f_{*1}(\gamma_2^2)=\gamma_2^1-\gamma_1^2$,  i.e. $f_{*1}$ is given by the matrix
\begin{equation}\label{eqn:ex3}
(f_{*1})=\left(
\begin{array}{cccc}
0 & 0 & 1 & 0 \\
-1 & -1 & -1 & -1 \\
0 & 0 & 0 & 1\\
0 & 1 &0 & 0
\end{array}
\right).
\end{equation}
Note that this map is not a permutation map on $\T^2\vee\T^2$, since the matrix of $f_{*1}$ is not a permutation matrix of blocks of size $2$.
 Since the generators of $H_2(\T^2\vee\T^2,\Q)$ are $\{\gamma_1^1\wedge\gamma_1^2,\gamma_2^1\wedge\gamma_2^2 \}$,
the map $f_{*2}$ expressed in terms of these generators:
 $$
 f_{*2}:\left\{ \begin{array}{ll}
 \gamma_1^1\wedge\gamma_1^2\to f_{*1}(\gamma_1^1)\wedge f_{*1}(\gamma_1^2)&=-\gamma_1^2\wedge(\gamma_2^2-\gamma_1^2)=-\gamma_1^2\wedge\gamma_2^2+\gamma_1^2\wedge\gamma_1^2 =-\gamma_1^2\wedge\gamma_2^2.\\

 \gamma_2^1\wedge\gamma_2^2\to  f_{*1}(\gamma_2^1)\wedge f_{*1}(\gamma_2^2)&=(\gamma_1^1-\gamma_1^2)\wedge(\gamma_2^1-\gamma_1^2)=\\
 & =\gamma_1^1\wedge\gamma_2^1 -\gamma_1^1\wedge\gamma_1^2-\gamma_1^2\wedge\gamma_2^1+\gamma_1^2\wedge\gamma_1^2=\\
 &=\gamma_1^1\wedge\gamma_2^1-\gamma_1^1\wedge\gamma_1^2-\gamma_1^2\wedge\gamma_2^1.\\
  \end{array}
 \right.
 $$

 Since the (co)homology of $\T^2\vee\T^2$ is  the direct sum (as rings) of the ring (co)homology of $\T^2$ and $\T^2$, we have $\gamma_i^t\wedge\gamma_j^r=0$, if $i\neq j$.
 Therefore $f_{*2}$ is the null map.
This is a contradiction since the coordinate maps: $f_{ij}:\T^2\to\T^2$, with $1\leq i,j\leq 2$, does not satisfy
$$
f_{*2}=\left(
\begin{array}{cc}
f_{11_{*2}} & f_{12_{*2}} \\
f_{21_{*2}} & f_{22_{*2}}
\end{array}
\right).
$$
For this reason, a continuous $f:\T^2\vee\T^2\to\T^2\vee\T^2$ with $f_{*1}$ as in~(\ref{eqn:ex3}) does not exists.

 %%%%%%%%%%%%%%%%%%%%%%%%%%%%%%%
 
 \medskip
 
 \noindent
{\bf Example 4:} Let $f$ be a self-map on $\T^2\vee\T^2$ such that
its coordinates are:
$$
f_{1,1}= \left(\begin{array}{cc} 0 & 1\\ 0& 0\end{array}\right),
\quad f_{1,2}= \left(\begin{array}{cc} 0 & 0\\ 1& 0\end{array}\right), \quad
f_{2,1}= \left(\begin{array}{cc} 0 & 0\\ a& 0\end{array}\right), \quad
f_{2,2}= \left(\begin{array}{cc} 0 & 1\\ 0& 0\end{array}\right).
$$ 
with $a$ an integer. So
\begin{equation}\label{eqn:ex4}
(f_{*1})=\left(
\begin{array}{cccc}
0 & 1 & 0 & 0 \\
0 & 0 & 1 & 0 \\
0 & 0 & 0 & 1\\
a & 0 &0 & 0
\end{array}
\right).
\end{equation}

The matrix of $f_{*1}$ can not be written as a permutation matrix of blocks of  size 2.
As in  the Example 3, $f_{*2}$ is the null map. Therefore it does not exists 
such self-map $f$ on $\T^2\vee\T^2$, with such coordinates.  

Examples 3 and 4 show that not any $4\times 4$ integer matrix gives rise to a map $f:\T^2\vee\T^2\to\T^2\vee\T^2$ having such  matrix as $f_{*1}$.

 %%%%%%%%%%%%%%%%%%%%%%%%%%%%%%%
 \noindent
{\bf Example 5:}
Let $\T^2\vee\T^2$, 
given by the coordinate-maps:
$$
f_{1,1}= \left(\begin{array}{cc} 0 & 1\\ -1& 0\end{array}\right),
\quad f_{1,2}= \left(\begin{array}{cc} 1 & 0\\ -1& -1\end{array}\right), \quad
f_{2,1}= f_{2,2}=\left(\begin{array}{cc} 0 & 0\\ 0& 0\end{array}\right).
$$

If $\{\gamma_1^1,\gamma_1^2,\gamma_2^1,\gamma_2^2 \}$ are the generators of $H_1(\T^2\vee\T^2,\Q)$ then 
$f_{*1}(\gamma_1^1)=-\gamma_1^2$, 
$f_{*1}(\gamma_1^2)=-\gamma_1^2$,
$f_{*1}(\gamma_2^1)=\gamma_1^1-\gamma_1^2$ and
$f_{*1}(\gamma_2^2)=-\gamma_1^2$,  i.e. $f_{*1}$ is given my the matrix
\begin{equation}\label{eqn:ex5}
(f_{*1})=\left(
\begin{array}{cccc}
0 & 1 & 1 & 0 \\
-1 & 0 & -1 & -1 \\
0 & 0 & 0 & 0\\
0 & 0 &0 & 0
\end{array}
\right).
\end{equation}
Note that it is not a permutative map, however the map $f_{*2}$ is well defined:
$$
(f_{*2})=\left(
\begin{array}{cc}
1 & -1 \\
0 & 0
\end{array}
\right).
$$
Therefore such continuous map $f:\T^2\vee\T^2\to\T^2\vee\T^2$ is well defined.

Here the Lefschetz numbers are:  
$$
L(f^m)=1-\trace(f_{*1}^m)+\trace(f_{*2}^m)=
\left\{\begin{array}{ll}
0, & m\equiv 0 \pmod 4 \\
2, & m\equiv 1,3 \pmod 4\\
4, &  m\equiv 2 \pmod 4;
\end{array}
\right.
$$
and its Lefschetz zeta function is:
$$
\zf(t)=\dfrac{1+t^2}{(1-t)^2}.
$$

Note that unlike the permutative case (Theorem \ref{thm:main1}), we have that $L(f^m)=0$ when $m\not\equiv 0 \pmod 2$.

Its Dold coefficients are $\ell(f)=\ell(f^2)=2$, $\ell(f^4)=-2$, and $\ell(f^m)=0$ if $m\neq 1,2,4$. Hence the set of algebraic periods is
$\mbox{APer}_L(f)=\{1,2,4\}.$
 %%%%%%%%%%%%%%%%%%%%%%%%%%%%%%%%%%%%%%%
 
 \section{Open questions and further remarks}\label{s:questions}
 
 In this section we list some open problems and questions that arose from the topics dealt in the article, and we consider interesting and important to understand.

 \begin{enumerate}
 \item 
 Examples 3 and 4 show that not any $4\times 4$ integer matrix allows to define a map $f:\T^2\vee\T^2\to\T^2\vee\T^2$ having this  matrix as $f_{*1}$, more generally
  not any coordinate maps give a well-defined map on a wedge sum. This gives rise to the following  questions:
 \smallskip
 
 \begin{enumerate}
\item  Let $(X,x_0)$ be a pointed topological manifold, and $s\times s$ continuous maps $f_{ij}:X\to X$. Under which conditions the maps $f_{ij}$  produce a  continuous self-map $f$ on $\bigvee_{j=1}^s X$ having the maps $f_{ij}$ as coordinates?
 
 \smallskip
 
 Example 5 shows that there are continuous  and non-trivial self-maps on $\bigvee_{i=1}^s\T^n$, which are not of permutation type.
 
 \item In the particular case of $X=\T^n$: Which matrices of integer coefficients arise from self-maps on  wedge sums of the tori? 
 We point out, that for graph maps, i.e. $n=1$, there are no such obstructions, any possible integer matrix of the right dimension is allowed, since all the homology groups of $H_k(\bigvee_{i=1}^s\SSS^1, \Q)$ are trivial for $k>1$.

 \end{enumerate}
 \smallskip

\item The main difference between the study of the dynamics of  maps on  wedge sums of tori, of dimension larger than $1$ with graphs maps, is that for the latter there are no obstructions on the homology groups of dimension bigger than $1$. 
%which gives no obstructions on the possible maps on homology, as in the toral case, that was described in the point (1).
 
 \item Give general conditions for a map $f:\bigvee_{i=1}^s X_i \to \bigvee_{i=1}^s X_i$ to be  Lefschetz periodic point free and partially periodic point free. 
 Partial answers to this question have being given depending the nature of the spaces $X_i$. In the case of topological graphs and wedge sum of spheres it was studied partially before, for instance see~\cite{Ll,Ll-Si:2013,Si:2023}. 
 
\item Consider  a quasi-unipotent map $f:\bigvee_{i=1}^s \T^n \to \bigvee_{i=1}^s \T^n$, with permutation $\sigma$, whose non-constant coordinates $f_{i\sigma(j)}:\T^n\to\T^n$ are quasi-unipotent, i.e. the eigenvalues of the induced map on homology are roots of unity.
Let $C_{i}(t)=\Phi_{n(i)_1}(t)\cdots \Phi_{n(i)_r}(t)$ be the characteristic polynomial of $(f_{i\sigma(i)})_{*1}$, and $\Phi_k(t)$ the $k$-th cyclotomic polynomial.
Describe the set of periodic periods of $f$ in term of the $n(j)_j$, in a similar manner as it was done in~\cite{BGMS} for toral maps. 
In this case the problem cannot be easy done by using directly Theorem~\ref{thm:l-chica}, since could be cancelation in the corresponding Dold coefficients. 

\item Give a more precise description of the set of algebraic periods of permutative toral maps. 

 \end{enumerate}
 
\noindent 
\textbf{ Acknowledgements:}
 The authors would like to thank the anonymous referees for their
remarks which helped to improve the article.
 
 %%%%%%%%%%%%%%%%%%%%%%%%%%%%%%%%%%%%%%%


\begin{thebibliography}{99}

\bibitem{ALLM} {\sc Ll. Alsed\`{a}, J. Llibre, M. Misiurewicz},
{\em Combinatorial Dynamics and Entropy in dimension one}
(Second Edition), Advanced Series in Nonlinear Dynamics,
vol {\bf 5}, World Scientific, Singapore 2000.

\bibitem{apostol}{\sc T.M. Apostol},
\emph{Introduction to Analytic Number Theory}, Springer--Verlag, New
York, 1976.

\bibitem{babo}{\sc I. K. Babenko, S. A. Bogatyi}, 
The behaviour of the index of periodic points under iterations of a mapping, \emph{Math. USSR Izv.}, \textbf{38} (1992), 1--26.

\bibitem{BW} {\sc M.M. Barge, R.B. Walker},
{\it Periodic point free maps of tori which have rotation sets with
interior}, Nonlinearity \textbf{6} (1993), 481--489.

\bibitem{BS}{\sc P. Berrizbeitia, V.F. Sirvent}, 
On the Lefschetz zeta function for quasi-unipotent
maps on the $n$-dimensional torus, 
\emph{J. Difference Equ. Appl.},
\textbf{20} (2014), 961--972. 

\bibitem{BGMS}{\sc P. Berrizbeitia, M.J. Gonz\'alez, A. Mendoza, V.F. Sirvent},
On the Lefschetz zeta function for quasi-unipotent maps on the$n$-dimensional torus II: 
The general case.
\emph{Topology  Appl.}, \textbf{210} (2016), 246--262.

\bibitem{BGS}{\sc P. Berrizbeitia, M.J. Gonz\'alez, V.F. Sirvent},
On the Lefschetz zeta function for a class of toral maps,
\emph{Qual. Theory Dyn. Syst.}, \textbf{20}, No. 1, Paper No. 17, 15 p. (2021).

%\bibitem{borsuk}{\sc K. Borsuk},
% \"Uber eine Klasse von lokal zusammenh\"angenden R\"aumen, \emph{Fund. Math.},
%\textbf{19} (1932), 220--242.


\bibitem{brown} {\sc R.F. Brown},
{\em The Lefschetz fixed point theorem}, Scott, Foresman and
Company, Glenview, IL, 1971.

\bibitem{BGW}{\sc J.Byszewski, G. Graff, T. Ward,}
Dold sequences, periodic points, and dynamics.
\emph{Bull. Lond. Math. Soc.} \textbf{ 53} (2021), No. 5, 1263--1298.


\bibitem{dold}{\sc  A. Dold},
Fixed point indices of iterated maps, \emph{Invent. math.}
\textbf{74} (1983), 419--435.

%\bibitem{DU}{\sc N.M. Dos Santos, R. Urz\'ua},
%\emph{ Minimal homeomorphisms on low dimensional tori,} 
%Ergod. Th. and Dynam. Sys. \textbf{29} (2009), 1515--1528.

\bibitem{ferrario}{\sc D. Ferrario},
\emph{Generalized Lefschetz numbers of pushout maps},
Topology  Appl., \textbf{68} (1996), 67--81.

%\bibitem{franks1} {\sc J. Franks},
%Some smooth maps with infinitely many hyperbolic points, {\em
%Trans. Amer. Math. Soc.} {\bf 226} (1977), 175--179.

\bibitem{franks2}{\sc J. Franks},
\emph{Homology and Dynamical Systems}.
CBMS Regional Conf. Ser. in Math, 49, (1982) AMS.

%\bibitem{FL} {\sc J. Franks, J. Llibre},
%{\it Periods of surface homeomorphisms},
%Contemporary Mathematics {\bf 117} (1991), 63--77.

%\bibitem{Fu} {\sc F. B. Fuller},
%{\it The existence of periodic points},
%Ann. Math. {\bf 57}, (1953), 229--230.


%\bibitem{GW}{\sc A. Gierzkiewicz,  K. W\'{o}jcik},
%{\it Lefschetz sequences and detecting periodic points}, Discrete
%Contin. Dyn. Syst. {\bf 32} (2012), 81--100.


\bibitem{GKNS}{\sc G. Graff, A. Kaczkowska, P. Nowak-Przygodzki,  J. Signerska},
 Lefschetz periodic point free self-maps of compact manifolds,
\emph{Topology  Appl.} \textbf{159} (2012), 2728--2735.


\bibitem{GMM}{\sc G. Graff, W. Marzantowicz, L.P.  Michalak},
Dold coefficients of quasi-unipotent homeomorphisms of orientable surfaces, Pre-print arXiv:2411.19313 (2024).

\bibitem{G-Ll:2011} {\sc J.L.G. Guirao, J. Llibre},
 On the Lefschetz periodic point free continuous self--maps on
connected compact manifolds, \emph{Topology   Appl.} {\bf 158} (2011),
2165--2169.

\bibitem{G-Ll:2} {\sc J.L.G. Guirao, J. Llibre},
On the periods of a continuous self-map on a graph, 
\emph{Comput. Appl. Math. } \textbf{38(78)} (2019), pp 6.

\bibitem{G-Ll-W} {\sc J.L.G. Guirao, J. Llibre, W. Gao},
  Topological entropy of continuous self-maps on a graph,
  \emph{ Comput. Appl Math }, \textbf{38(154)} (2019), pp 10.

\bibitem{Ha} {\sc M. Handel},
{\it Periodic point free homeomorphism of $\T^2$}, Proc. Amer. Math.
Soc. \textbf{107} (1989), 511--515.


\bibitem{hatcher}{\sc A. Hatcher},
{\it Algebraic Topology}, Cambridge University Press, 2002.

\bibitem{Ja} {\sc T. J\"{a}ger},
{\it Periodic point free homeomorphisms of the open annulus: From
skew products to non-fibred maps}, Proc. Amer. Math. Soc. \textbf{
138} (2010), 1751--1764.


\bibitem{JM}{\sc J. Jezierski and W. Marzantowicz},
\emph{Homotopy methods in topological fixed and periodic points
theory}, Springer Verlag, Berlin, 2006.

\bibitem{jiang}{\sc B.Jiang}, 
Estimation of the number of periodic orbits, \emph{ Pac. J. Math.} \textbf{172}  (1996):151--185.

\bibitem{Ko}{\sc A. Kocsard},
\emph{Periodic point free homeomorphisms and irrational rotation factors,}
Ergodic Theory Dyn. Syst. \textbf{41}(2021) 2946--2982.


%\bibitem{lang} S. Lang, {\em Algebra}, Third Edition,  
%Graduate Texts in Mathematics 211, Springer, 2005.

\bibitem{lefschetz}{\sc S. Lefschetz}, 
Intersections and transformations of complexes 
and manifolds, {\em Trans. Amer. Math. Soc.}, \textbf{ 28} (1926), 1--49. 

\bibitem{Llibre-93} {\sc J. Llibre},
 Lefschetz numbers for periodic points,
 \emph{ Contemp. Math. } \textbf{152}, 215 -- 227 (1993).
 
\bibitem{Ll} {\sc J. Llibre},
 Periodic point free continuous self--maps on graphs and
surfaces, \emph{Topology Appl.} {\bf 159} (2012), 2228--2231.

\bibitem{Ll-Mi}{\sc J. Llibre, M. Misiurewicz}, 
Horseshoes, entropy and periods for graphs
maps, \emph{Topology}, \textbf{52}(1993), 649--664.

\bibitem{Ll-Sa}{\sc J. Llibre, A. S\'a}, 
Periods for continuous self-maps of a bouquet of circles,
\emph{ R. Acad. Sci. Paris. S\'er I Maths.} \textbf{318} (1994), 1035--1040.


\bibitem{Ll-Si:2013}{\sc  J. Llibre, V.F. Sirvent},
Partially periodic point free self-maps on surfaces, graphs, wedge sums and products of spheres, \emph{J. Difference Equ. Appl.} \textbf{19} (2013), 1654--1662. 

\bibitem{Ll-Si:survey}{\sc  J. Llibre, V.F. Sirvent},
  A survey on the minimal sets of Lefschetz periods for Morse--Smale diffeomorphisms on some closed manifolds. 
  \emph{Publicaciones Matem\'aticas del Uruguay} \textbf{14}, 155--169 (2013).

%\bibitem{Ll-Si:2018}{\sc  J. Llibre, V.F. Sirvent},
%On Lefschetz periodic point free self-maps, 
%\emph{J. Fixed Point Theory Appl.} \textbf{20} (2018), no. 1, Art. 38, 9 pp.

%\bibitem{Ll-Si:2019}{\sc  J. Llibre, V.F. Sirvent},
%Periodic structure of transversal maps on sum-free products of spheres,
%\emph{ J. Difference Equ. Appl. }\textbf{25} (2019),  619--629.

\bibitem{Ll-Si:2021}{\sc  J. Llibre, V.F. Sirvent},
Entropy and periods for continuous graph maps
\emph{Comput. Appl. Math. }\textbf{40} (2021), Paper No. 300, 11p.


%\bibitem{LT} {\sc J. Llibre, M. Todd},
%{\it Periods, Lefschetz numbers and entropy for a class of maps on a
%bouquet of circles}, J. Difference Equ. Appl. {\bf 11} (2005),
%1049--1069.

\bibitem{MP}{\sc W. Marzantowicz, P. Przygodzki},
 Finding periodic points of a map by use of a $k$-adic expansion, 
 \emph{Discrete Contin. Dyn. Syst.}  \textbf{5 }(1999), pp. 495--514.

%\bibitem{MR}{\sc N. Metropolis, G. Rota,} 
%Witt vectors and the algebra of necklaces,
%\emph{ Adv. in Math.} \textbf{50} (1983), no. 2, 95--125.

%\bibitem{sagan}{\sc B. Sagan},
%\emph{ The symmetric group.  Representations, combinatorial algorithms, and symmetric functions},
%Wadsworth \& Brooks/Cole Mathematics Series. Pacific Grove, CA, USA, 197 p. (1991).

\bibitem{Si:2023}{\sc V.F. Sirvent},
On partially periodic point free self-maps on the wedge sums of spheres,  
\emph{J. Difference Equ. Appl.} \textbf{29} (2023), 19--31.

\bibitem{spanier}{\sc E.H. Spanier},
\emph{Algebraic topology}, McGraw-Hill, New York 1966. 

%\bibitem{whitehead}{\sc G.W. Whitehead},
%\emph{Elements of Homotopy Theory}, Springer--Verlag, New York, 1978.
 
 %\bibitem{smale}{\sc  S. Smale}, 
 %Differentiable dynamical systems,
%\emph{Bull. Amer. Math. Soc.}, \textbf{73}, Number 6 (1967), 747--817.


\end{thebibliography}
\end{document}